\title{{\bf On the Diameter and Girth of an Annihilating-Ideal Graph\thanks {This research was in part supported 
 by grant numbers (89160031) and (91130031) from IPM for the second and fourth authors respectively.}}}
\author{{{\bf F. Aliniaeifard${^{\rm a}}$,   M. Behboodi${^{\rm b}}$\thanks
{ Corresponding author}}},  {\bf E. Mehdi-Nezhad}${^{\rm c}}$, {\bf Amir M. Rahimi}${^{\rm d}}$\\
{\small{$^{\rm a}$Department of Mathematics and Statistics, York University }} \\
{\small{North York, Ontario Canada M3J 1P3}}\\
  {\small{${^{\rm b}}$Department of Mathematical Sciences, Isfahan University of Technology}}\vspace{-1mm}\\ {\small{
Isfahan,
Iran, 84156-83111}}\\
{\small{$^{\rm c}$Department of Mathematics, University of Cape Town }} \\
{\small{Cape Town, South Africa}}\\
{\small{$^{\rm b,d}$School of Mathematics, Institute for Research in Fundamental Sciences (IPM) }} \\
{\small{Tehran Iran, 19395-5746}}\\
\footnotesize{${^{\rm a}}$faridanf@mathstat.yorku.ca}\vspace{-1mm}\\
\footnotesize{${^{\rm b}}$mbehbood@cc.iut.ac.ir}\vspace{-1mm}\\
\footnotesize{${^{\rm c}}$mhdelh001@myuct.ac.ca}\vspace{-1mm}\\
\footnotesize{${^{\rm d}}$amrahimi@ipm.ir}}
\def\be{\begin{enumerate}}
\def\ee{\end{enumerate}}
\newtheorem{ttheo}{Theorem}[section]
\newtheorem{ccoro}[ttheo]{Corollary}
\newtheorem{llem}[ttheo]{Lemma}
\newtheorem{eexam}[ttheo]{Example}
\newtheorem{rrem}[ttheo]{Remark}
\newtheorem{ppro}[ttheo]{Proposition}
\newenvironment{pproof}{\noindent{\bf Proof. }}{}
\date{}
\begin{document}
  \maketitle
\begin{abstract}
{Let $R$ be a commutative ring with $1\neq 0$ and $\Bbb{A}(R)$ be the set of ideals with nonzero annihilators.
The annihilating-ideal graph of $R$ is defined as the graph $\Bbb{AG}(R)$ with the vertex set $\Bbb{A}(R)^{*} = \Bbb{A}(R)\setminus \{(0)\}$ and two distinct vertices $I$ and $J$ are adjacent if and only if $IJ = (0)$.
In this paper, we first study the interplay
between the diameter of annihilating-ideal graphs
and zero-divisor graphs. Also, we  characterize rings $R$ when ${\rm gr}(\Bbb{AG}(R))\geq 4$, and so we characterize rings whose annihilating-ideal graphs are bipartite. Finally, in the last section we discuss on a relation between the Smarandache vertices and diameter of $\Bbb {AG}(R)$.\vspace{3mm}\\
  {\footnotesize{\it\bf Key Words:} Commutative ring; Annihilating-ideal graph;
  Zero-divisor graph.}\\
  {\footnotesize{\bf 2010  Mathematics Subject
  Classification:}   05C38; 13B25; 13F20.}}
\end{abstract}

 \section{ Introduction}

Throughout this paper, all rings
are assumed to be commutative with identity. We denote the set
of all ideals which are a subset of an ideal $J$ of $R$ by $\Bbb{I}(J)$. We call an
ideal $I$ of $R$, an {\it annihilating-ideal} if there exists a
non-zero ideal $J$ of $R$ such that $IJ=(0)$, and use the
notation $\Bbb{A}(R)$ for the set of all annihilating-ideals of
$R$. By the {\it Annihilating-Ideal
graph} $\Bbb{AG}(R)$ of $R$ we mean the graph with vertices
$\Bbb{AG}(R)^{*} = \Bbb{A}(R) \setminus \{(0)\}$ such that
there is an (undirected) edge between vertices $I$ and $J$ if and
only if $I\neq J$ and $IJ=(0)$. Thus $\Bbb{AG}(R)$ is an empty
graph if and only if $R$ is an integral domain. The concept of the annihilating-ideal graph of a commutative ring was first introduced by Behboodi and Rakeei in \cite{be-ra1} and \cite{be-ra2}. Also in \cite{abmr}, the authors of this paper have extended and studied this notion to a more general setting as {\it the annihilating-ideal graph with respect to an ideal of $R$}, denoted $\Bbb{AG}_I(R)$. 

Let $G$ be a graph.
Recall that $G$ is connected if there is a path between any two
distinct vertices of $G$. For vertices
$x$ and $y$ of $G$, let $d(x, y)$ be the length of a shortest path
from $x$ to $y$ ($d(x, x) = 0$ and $d(x, y) = \infty$ if there is
no such path). The diameter of $G$, denoted by ${\rm diam}(G)$, is $sup\{d(x, y) | x$
and $y$ are vertices of $G\}$. The girth of $G$, denoted by
${\rm gr}(G)$, is the length of a shortest cycle in $G$ (${\rm gr}(G) =
\infty$ if $G$ contains no cycles). $\Bbb{AG}(R)$ is connected
with ${\rm diam}(\Bbb{AG}(R))\leq 3$ {\rm \cite[Theorem 2.1]{be-ra1}} and if $\Bbb{AG}(R)$ contains a cycle, then
${\rm gr}(\Bbb{AG}(R))\leq 4$ {\rm \cite[Theorem 2.1]{be-ra1}}. Thus ${\rm diam}(\Bbb{AG}(R))=0,1,2$ or 3; and
${\rm gr}(\Bbb{AG}(R))=3,4$ or $\infty$. Also,
$\Bbb{AG}(R)$ is a singleton (i.e., ${\rm diam}(\Bbb{AG}(R)) = 0$) if and only if either $R\cong \frac{\Bbb{K}[x]}{(x^2)}$, where $\Bbb{K}$ is a field or $R\cong L$, where $L$ is a coefficient ring of characteristic $p^2$, that is $L\cong \frac{A}{(p^2.1)}$, where $A$ is a discrete valuation ring of characteristic $0$ and residue field of characteristic $p$, for some prime number $p$ {\rm \cite[Remark 10]{aal}}.

Let $Z(R)$ be the set of
zero-divisors of $R$. The zero-divisor graph of $R$, denoted by
$\Gamma(R)$, is the (undirected) graph with vertices $Z(R)^{*} =
Z(R) \setminus \{0\}$, the set of nonzero zero-divisors of $R$, and
for distinct $x$, $y$ $\in Z(R)^{*}$, the vertices $x$ and $y$ are
adjacent if and only if $x y = 0$. Note that $\Gamma(R)$ is the
empty graph if and only if $R$ is an integral domain. Moreover, a
nonempty $\Gamma(R)$ is finite if and only if $R$ is finite and
not a field {\rm \cite[Theorem 2.2]{an-li}}. The concept of a zero-divisor graph
was introduced by Beck \cite{beck}.
However, he let all the elements of $R$ be vertices of the graph
and was mainly interested in colorings. $\Gamma(R)$ is connected with
${\rm diam}(\Gamma(R))\leq 3$ {\rm \cite[Theorem 2.3]{an-li}} and if $\Gamma(R)$ contains a cycle, then
${\rm gr}(\Gamma(R))\leq 4$ {\rm \cite[Theorem 2.2(c)]{an-li2}}. Thus ${\rm diam}(\Gamma(R))=0,1,2$ or
3; and ${\rm gr}(\Gamma(R))=3,4$ or $\infty $.
For a ring $R$, $nil(R)$ is the set of the nilpotent elements of
$R$. We say that $R$ is reduced if $nil(R)=0$.

Let $K_n$ denote the complete graph on
$n$ vertices. That is, $K_n$ has vertex set $V$ with $|V| = n$ and
$a-b$ is an edge for every $a,b \in V$. Let
$K_{m,n}$ denote the complete bipartite graph. That is, $K_{m,n}$
has vertex set $V$ consisting of the disjoint union of two subsets,
$V_{1}$ and $V_2$, such that $|V_{1}| = m$ and $|V_{2}| = n$, and
$a-b$ is an edge if and only if $a \in V_{1}$ and
$b\in V_{2}$. We may sometimes write $K_{|V_{1}|,|V_{2}|}$ to
denote the complete bipartite graph with vertex sets $V_{1}$ and
$V_{2}$. Note that $K_{m,n} = K_{n,m}$. Also, for every positive integer $n$, we denote a path of order $n$, by $P_n$.

In the present paper, we study the diameter and girth of annihilating-ideal graphs. In Section
$2$, we show that if $R$ is a Noetherian ring with $\Bbb{AG}(R)\ncong K_{2}$,
then ${\rm diam}(\Bbb{AG}(R)) = {\rm diam}(\Bbb{AG}(R[x]) =
{\rm diam}(\Bbb{AG}(R[x_{1},x_{2},..., x_{n}])= {\rm diam}(\Bbb{AG}(R[[x]]) = {\rm diam}(\Gamma(R)) = {\rm diam}(\Gamma(R[x]) =
{\rm diam}(\Gamma(R[x_{1},x_{2},..., x_{n}])= \Gamma(R[[x]])$. In Section $3$, we characterize rings $R$ when ${\rm gr}(\Bbb{AG}(R))\geq 4$. Finally, in the last section, we study some properties of the {\it Smarandache vertices} of $\Bbb {AG}(R)$.
\vspace{3mm}

\section{ Diameter of $\Bbb{AG}(R)$, $\Bbb{AG}(R[x])$, and $\Bbb{AG}(R[[x]])$}

In this section, we show that if $R$ is a Noetherian ring with $\Bbb{AG}(R)\ncong K_{2}$,
then ${\rm diam}(\Bbb{AG}(R)) = {\rm diam}(\Bbb{AG}(R[x])) = {\rm diam}(\Bbb{AG}(R[x_{1},x_{2},..., x_{n}]))= {\rm diam}(\Bbb{AG}(R[[x]])) = {\rm diam}(\Gamma(R)) = {\rm diam}(\Gamma(R[x])) = {\rm diam}(\Gamma(R[[x]]))$.\\

We remark that if $R$ is a commutative ring with identity, then the set
of regular elements of $R$ forms a saturated and multiplicatively
closed subset of $R$. Hence the collection of zero-divisors of
$R$ is the set-theoretic union of prime ideals. We write
$Z(R)=\cup_{i\in \Lambda}P_{i}$ with each $P_{i}$ prime. We will
also assume that these primes are maximal with respect to being
contained in $Z(R)$.

\begin{ttheo}\label{1.15}
Let $R$ be a ring  and $\Bbb{AG}(R)\not \cong K_{2}$. Then the following
conditions are equivalent:

${\rm (1)}$ $\Bbb{AG}(R)$ is a complete graph.

${\rm (2)}$ $\Bbb{AG}(R[x])$ is a complete graph.

${\rm (3)}$ $\Bbb{AG}(R[x_{1},x_{2},...,x_{n}])$ for all $n > 0$ is a complete graph.

${\rm (4)}$ $\Bbb{AG}(R[[x]])$ is a complete graph.

${\rm (5)}$ $\Gamma(R)$ is a complete graph.

${\rm (6)}$ $\Gamma(R[x])$ is a complete graph.

${\rm (7)}$ $\Gamma(R[x_{1},x_{2},...,x_{n}])$ for all $n > 0$ is a complete graph.

${\rm (8)}$ $\Gamma(R[[x]])$ is a complete graph.

${\rm (9)}$ $(Z(R))^{2}=0$.
\end{ttheo}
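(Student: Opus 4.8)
The plan is to make condition (9), $(Z(R))^2=0$, the hub: I would prove that (9) implies each of (1)--(8), and that each of (1)--(8) implies (9). First I dispose of the trivial case: if $R$ is a domain then $Z(R)=0$, all the graphs in (1)--(8) are empty (hence vacuously complete) and (9) holds, so everything is equivalent; thus I assume $R$ is not a domain. The key reformulation is that (9) says exactly that $N:=Z(R)=\mathrm{nil}(R)$ is an ideal with $N^2=0$ and $R/N$ is an integral domain: if $ab=0$ for all $a,b\in Z(R)$ then every zero-divisor is nilpotent, so $Z(R)=N$ is an ideal, $N^2=0$, and $R/N$ has no nonzero zero-divisors.

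For the forward direction $(9)\Rightarrow(1)$--$(8)$ I would first compute the zero-divisors of the extensions. Writing $N[x]$, $N[[x]]$, etc.\ for the ideals of elements whose coefficients all lie in $N$, one has $S/N[\,\cdot\,]\cong(R/N)[\,\cdot\,]$ for the corresponding extension $S$, and this is a domain; hence $N[\,\cdot\,]$ is prime, and a short lowest-degree (lowest-order) argument shows that any $f\notin N[\,\cdot\,]$ is a non-zero-divisor. Therefore $Z(S)=N[\,\cdot\,]$ in every case, and since $N^2=0$ every coefficient product lands in $N^2=0$, giving $(Z(S))^2=0$. Finally $(Z(S))^2=0$ makes both graphs complete: for $\Gamma(S)$ this is immediate, and for $\Bbb{AG}(S)$ note that any nonzero annihilating ideal lies inside $Z(S)$, so the product of two such ideals is generated by products of elements of $Z(S)$, all of which vanish.

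For the backward direction I would show that each of (1)--(8) forces $(Z(S))^2=0$ for the relevant ring $S$ and then restrict to constants: since $Z(R)\subseteq Z(S)$, the vanishing $(Z(S))^2=0$ yields $(Z(R))^2=0$, i.e.\ (9). For the zero-divisor-graph conditions (5)--(8) I would invoke the Anderson--Livingston dichotomy \cite{an-li2} that a complete $\Gamma(S)$ forces either $(Z(S))^2=0$ or $S\cong\Bbb{Z}_2\times\Bbb{Z}_2$; the latter is excluded because $\Bbb{Z}_2\times\Bbb{Z}_2$ has $\Bbb{AG}\cong K_2$ (ruled out by hypothesis when $S=R$), and because $R[x]$, $R[x_1,\dots,x_n]$, $R[[x]]$ are infinite and hence never $\cong\Bbb{Z}_2\times\Bbb{Z}_2$. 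I would also record that, for $R$ not a domain, the extensions automatically satisfy $\Bbb{AG}(S)\not\cong K_2$: fixing $0\neq a\in Z(R)$ with $at=0$, the ideals $(ax^i)$ give infinitely many distinct vertices.

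The crux, and the step I expect to be hardest, is the annihilating-ideal analogue: \emph{if $\Bbb{AG}(S)$ is complete and $\Bbb{AG}(S)\not\cong K_2$, then $(Z(S))^2=0$.} If $\Bbb{AG}(S)$ is a single vertex $I$, then $\mathrm{Ann}(I)$ is also a vertex, hence equals $I$, so $I^2=0$ and $Z(S)=I$, done. Otherwise $\Bbb{AG}(S)=K_n$ with $n\geq3$ and I must show every vertex squares to zero. Assuming not, a short argument with principal ideals (distinct principal vertices are adjacent, and $(b)=(c)$ gives $bc\in(b^2)$) produces a principal vertex $(a)$ with $a^2\neq0$. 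Then for any other vertex $J$: if $J\not\subseteq(a)$, a third vertex makes $(a)+J$ annihilating and distinct from $(a)$, so completeness forces $(a)\bigl((a)+J\bigr)=0$, i.e.\ $a^2=0$, a contradiction; hence $J\subseteq(a)$, and all other vertices lie in $M:=\mathrm{Ann}(a)\subsetneq(a)$. A further analysis identifies $\mathrm{Ann}(a^2)=(a)$ and $M=(a^2)$, shows $Z(S)=(a)$, and finally shows that every element of $(a)\setminus(a^2)$ generates $(a)$; since $(a)/(a^2)\cong R/(a)$ as modules, this forces $R/(a)$ to be a field, so $(a^2)$ is a simple module and $\Bbb{AG}(S)$ has only the two vertices $(a),(a^2)$ --- contradicting $n\geq3$. (A nontrivial-idempotent splitting disposes of the degenerate possibility $(a^2)=(a)$, which is incompatible with $M\subseteq(a)$.) This is precisely where $\Bbb{AG}(S)\not\cong K_2$ is indispensable, since $\Bbb{Z}_2\times\Bbb{Z}_2$ and $\Bbb{K}[x]/(x^3)$ show that completeness alone does not yield $(Z(S))^2=0$.
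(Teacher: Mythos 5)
Your proposal is correct, and it pivots on the same hub as the paper --- condition (9), with the same exclusion of the $\Bbb{Z}_2\times\Bbb{Z}_2$/$K_2$ degeneracy --- but the route is genuinely different: you prove from scratch what the paper merely cites. The paper's entire argument consists of three references: \cite[Theorem 2.8]{an-li} for ``$\Gamma(R)$ complete iff $(Z(R))^2=(0)$'' once $\Bbb{Z}_2\times\Bbb{Z}_2$ is discarded, \cite[Theorem 3]{aal} for the analogous classification of complete $\Bbb{AG}(R)$, and \cite[Theorem 3.2]{axt} for the transfer to $R[x]$, $R[x_1,\dots,x_n]$ and $R[[x]]$. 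You supply the content of all three: the identification $Z(S)=N[\,\cdot\,]$ with $N=Z(R)=\mathrm{nil}(R)$ prime of square zero (the substance of the Axtell--Coykendall--Stickles transfer), and, most substantially, a direct proof that a complete $\Bbb{AG}(S)\not\cong K_2$ forces $(Z(S))^2=(0)$. I checked the condensed steps of that last argument --- producing a principal vertex $(a)$ with $a^2\neq 0$, trapping every other vertex inside $\mathrm{Ann}(a)$ via the vertex $(a)+J$, the identifications $\mathrm{Ann}(a^2)=(a)$ and $\mathrm{Ann}(a)=(a^2)$, and the simplicity of $(a)/(a^2)\cong R/(a)\cong (a^2)$ forcing exactly two vertices --- and they all go through. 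Your version buys transparency on two points the paper glosses over: it shows exactly how the hypothesis $\Bbb{AG}(R)\not\cong K_2$ eliminates the exceptional rings ($F_1\times F_2$, $\Bbb{K}[x]/(x^3)$, $\Bbb{Z}_8$, \dots) hiding in both cited classifications, and it verifies that the extension rings satisfy the hypotheses of those classifications (infinitely many vertices $(ax^i)$ when $R$ is not a domain), a check the paper never performs before applying \cite[Theorem 3]{aal} and \cite[Theorem 2.8]{an-li} to $R[x]$ and $R[[x]]$. The cost is length: the paper's citation-based proof is four lines.
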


\begin{pproof}
  If $R\cong \Bbb{Z}_2\times \Bbb{Z}_2$, then  $\Bbb{AG}(R)\cong K_2$, yielding a contradiction. Thus By {\rm \cite[Theorem 2.8]{an-li}}, $\Gamma(R)$ is a complete graph if and only if   $(Z(R))^{2}=(0)$. Also, by {\rm \cite[Theorem 3]{aal}}, $\Bbb{AG}(R)$ is a complete graph if and only if $(Z(R))^{2}=(0)$. So, the results follow easily from {\rm \cite[Theorem 3.2]{axt}}.\hfill $\square$
\end{pproof}

\begin{llem}\label{1.10}
Let $R$ be a ring such that
${\rm diam}(\Bbb{AG}(R)) = 2$. If $Z(R) = P_{1} \cup P_{2}$ with $P_{1}$
and $P_{2}$ distinct primes in $Z(R)$, then $P_{1}\cap
P_{2} = (0)$.
\end{llem}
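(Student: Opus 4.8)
The plan is to focus on the two ``mixed'' principal vertices $(a)$ and $(b)$ with $a\in P_1\setminus P_2$ and $b\in P_2\setminus P_1$, to show that the hypothesis ${\rm diam}(\Bbb{AG}(R))=2$ forces every such product $ab$ to vanish, and then to pull an arbitrary element of $P_1\cap P_2$ into a product annihilated by a regular element. First I would record the elements and the one fact that drives everything. Since $P_1$ and $P_2$ are distinct and each is maximal with respect to being contained in $Z(R)$, neither contains the other, so I may choose $a\in P_1\setminus P_2$ and $b\in P_2\setminus P_1$. Both are nonzero zero-divisors (as $P_1,P_2\subseteq Z(R)$), so $(a)$ and $(b)$ are vertices of $\Bbb{AG}(R)$, and $(a)\neq (b)$ because $a\notin P_2$ while $(b)\subseteq P_2$. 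The linchpin is that $a+b\notin P_1\cup P_2=Z(R)$, since it lies in neither prime; hence $a+b$ is a regular element, and consequently any $z$ with $za=zb=0$ satisfies $z(a+b)=0$ and therefore $z=0$.

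Next I would invoke the diameter bound. A common neighbour of $(a)$ and $(b)$ would be a vertex $K$ with $K(a)=(0)=K(b)$, i.e. $K\subseteq \Ann(a)\cap \Ann(b)$; but the regularity of $a+b$ just observed gives $\Ann(a)\cap \Ann(b)=(0)$, so $(a)$ and $(b)$ admit no common neighbour. Since $\Bbb{AG}(R)$ is connected with ${\rm diam}(\Bbb{AG}(R))=2$ and $(a)\neq (b)$, the absence of a common neighbour rules out distance $2$, and the only remaining possibility is that $(a)$ and $(b)$ are adjacent, i.e. $ab=0$. As the choice of $a$ and $b$ was arbitrary, this establishes the key intermediate fact: $a'b'=0$ for all $a'\in P_1\setminus P_2$ and all $b'\in P_2\setminus P_1$.

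Finally I would finish by a shifting argument. Fix $a\in P_1\setminus P_2$ and $b\in P_2\setminus P_1$, and let $c\in P_1\cap P_2$. Then $a+c\in P_1\setminus P_2$ and $b+c\in P_2\setminus P_1$, so by the intermediate fact $(a+c)b=0$ and $a(b+c)=0$; expanding these and using $ab=0$ yields $cb=0$ and $ca=0$. Hence $c(a+b)=ca+cb=0$, and since $a+b$ is regular we conclude $c=0$. As $c\in P_1\cap P_2$ was arbitrary, $P_1\cap P_2=(0)$, as claimed.

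I expect the main obstacle to be the middle step rather than any calculation. The natural first instinct is to try to exhibit $(a)$ and $(b)$ at distance $3$ and contradict ${\rm diam}=2$ directly, but the products $ab$ need not be nonzero a priori, so that route stalls. The correct move is to turn the diameter bound around: the absence of a common neighbour together with ${\rm diam}=2$ forces \emph{adjacency}, converting the geometric hypothesis into the algebraic identity $ab=0$ for every mixed pair. The regularity of $a+b$, which powers both the common-neighbour computation and the concluding step, is the device that makes the whole argument close.
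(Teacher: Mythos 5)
Your proposal is correct and follows essentially the same route as the paper's proof: the diameter-$2$ dichotomy (adjacent or common neighbour) combined with the regularity of $p_1+p_2$ forces $p_1p_2=0$, and the same shift-by-$x$ trick then kills every element of $P_1\cap P_2$. The only difference is cosmetic — you make explicit that ${\rm Ann}(p_1)\cap{\rm Ann}(p_2)=(0)$ before invoking the dichotomy, whereas the paper derives the contradiction inside the common-neighbour branch.
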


\begin{pproof}
Let $x\in P_{1} \cap P_{2}$, $p_1\in P_1\setminus P_2$ and $p_2\in P_2\setminus P_1$. Since ${\rm diam}(\Bbb{AG}(R))=2$, either $(Rp_1)(Rp_2)=(0)$ or there exists a non-zero ideal $I$ such that $I\subseteq {\rm Ann}(p_1)\cap {\rm Ann}(p_2)$. If $(Rp_1)(Rp_2)\neq (0)$, then $I(p_1+p_2)=(0)$. Thus $p_1+p_2\in Z(R)$, yielding a contradiction. Therefore, $(Rp_1)(Rp_2)=(0)$ and so $p_1p_2=0$. Since $p_2+x\in P_2\setminus P_1$ and $p_1+x\in P_1\setminus P_2$, we conclude that $0=p_1(p_2+x)=p_1x$ and $0=p_2(p_1+x)=p_2x$. Thus $x(p_1+p_2)=0$, so $x=0$. Therefore, $P_1\cap P_2=(0)$.\hfill $\square$
\end{pproof}

\begin{llem}\label{new}
Let $R$ be a Noetherian ring and $\Bbb{AG}(R)\not \cong K_{2}$. Then ${\rm diam}(\Bbb{AG}(R))=2$ if and only if $Z(R)$ is either the union of two primes with
intersection $(0)$ or $Z(R)$ is a prime ideal such that
$(Z(R))^{2} \neq (0)$.
\end{llem}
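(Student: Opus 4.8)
The plan is to reduce the whole statement to the combinatorics of the finitely many maximal primes in $Z(R)$. Since $R$ is Noetherian, I would write $Z(R)=P_1\cup\cdots\cup P_k$, where $P_1,\dots,P_k$ are the finitely many primes maximal with respect to being contained in $Z(R)$; each $P_i$ is an associated prime, say $P_i={\rm Ann}(x_i)$ with $x_i\neq 0$. The one structural fact I would establish first, via prime avoidance, is that a nonzero ideal $I$ is a vertex of $\Bbb{AG}(R)$ if and only if $I\subseteq Z(R)$, and consequently that for vertices $I,J$ one has ${\rm Ann}(I)\cap{\rm Ann}(J)={\rm Ann}(I+J)\neq(0)$ if and only if $I+J\subseteq Z(R)$ (equivalently, $I+J$ lies in a single $P_i$). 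Reading the distance condition through this fact, $\Bbb{AG}(R)$ has diameter $\le 2$ exactly when every pair of vertices $I,J$ with $IJ\neq(0)$ satisfies $I+J\subseteq Z(R)$; this is the criterion I will verify case by case. Throughout I will use that, since $\Bbb{AG}(R)\not\cong K_2$, Theorem \ref{1.15} says $\Bbb{AG}(R)$ is complete (i.e. ${\rm diam}\le 1$) if and only if $(Z(R))^2=(0)$, so that ${\rm diam}(\Bbb{AG}(R))\ge 2$ is equivalent to $(Z(R))^2\neq(0)$.

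For the ``if'' direction I would treat the two cases separately. If $Z(R)=P$ is prime with $(Z(R))^2\neq(0)$, then every vertex lies in $P={\rm Ann}(x)$, so $x$ annihilates $I+J$ for any two vertices; hence any non-adjacent pair has the common neighbour $Rx$ (distinct from $I$ and $J$, for if $Rx=I$ then $IJ=RxJ=(0)$ since $xJ=(0)$), giving ${\rm diam}\le 2$, while $(Z(R))^2\neq(0)$ gives ${\rm diam}\ge 2$, so ${\rm diam}=2$. If $Z(R)=P_1\cup P_2$ with $P_1\cap P_2=(0)$, I first note $(Z(R))^2\neq(0)$: otherwise every zero-divisor would square to $0$, forcing $Z(R)=\sqrt{(0)}\subseteq P_1\cap P_2=(0)$, which is absurd. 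For ${\rm diam}\le 2$, any two non-adjacent vertices $I,J$ must lie in a common $P_i$ (if $I\subseteq P_1$ and $J\subseteq P_2$ then $IJ\subseteq P_1P_2\subseteq P_1\cap P_2=(0)$, contradicting $IJ\neq(0)$), and then the other prime annihilates both, giving a common neighbour; hence ${\rm diam}=2$.

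For the ``only if'' direction, assume ${\rm diam}(\Bbb{AG}(R))=2$, so $(Z(R))^2\neq(0)$. If $k=1$ this is exactly case (b); if $k=2$, Lemma \ref{1.10} yields $P_1\cap P_2=(0)$, which is case (a). The crux, and where I expect the main difficulty, is ruling out $k\ge 3$, since the prime-avoidance trick of Lemma \ref{1.10} breaks down (the element $p_1+p_2$ may land in a third prime). The resolution I propose is to exhibit a distance-$3$ pair directly from the primes themselves, each of which is a vertex because $x_i\in{\rm Ann}(P_i)$. For any two distinct maximal primes $P_a,P_b$, prime avoidance together with incomparability gives $P_a+P_b\not\subseteq Z(R)$, whence ${\rm Ann}(P_a+P_b)=(0)$ and $P_a,P_b$ have no common neighbour. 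It then suffices to find two such primes with $P_aP_b\neq(0)$, making them also non-adjacent. When $k\ge 3$ this is automatic: if $P_1P_2=(0)$ then $P_1P_2\subseteq P_3$, and primeness of $P_3$ forces $P_1\subseteq P_3$ or $P_2\subseteq P_3$, contradicting the incomparability of the maximal primes. Thus $P_1,P_2$ are non-adjacent with no common neighbour, so ${\rm diam}(\Bbb{AG}(R))=3$, contradicting ${\rm diam}=2$. Therefore $k\le 2$, and the characterization follows.
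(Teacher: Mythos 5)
Your proof is correct and follows essentially the same route as the paper's: decompose $Z(R)$ into its finitely many maximal primes via Kaplansky, use Theorem \ref{1.15} to equate ${\rm diam}\le 1$ with $(Z(R))^2=(0)$, invoke Lemma \ref{1.10} for the two-prime case, and kill the case of three or more primes by noting that $P_1P_2\subseteq P_3$ would violate incomparability while a common annihilator would force $P_1+P_2$ into a single maximal prime. The only cosmetic difference is that you exhibit the primes $P_1,P_2$ themselves as an explicit distance-$3$ pair, whereas the paper runs the same two sub-arguments on principal ideals $Rp_1,Rp_2$ generated by elements chosen to avoid the other maximal primes.
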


\begin{pproof}
Suppose that $Z(R)=\cup_{i\in
\Lambda}P_{i}$ where every $P_i$ is a maximal prime in $Z(R)$ and $|\Lambda|>2$. Since $R$ is a Noetherian ring, by {\rm \cite[Theorem 80]{kap}}, $\Lambda$ is finite. Let $P_1, P_2, P_3 \in \{P_i: i\in \Lambda\}$. If $P_1 \subseteq \cup_{i\in \Lambda\setminus \{1\}}P_{i}$, then by {\rm \cite[Theorem 81]{kap}}, $P_1\subseteq P_i$ for some $i\in \Lambda\setminus \{1\}$, yielding a contradiction. Therefore, there exists $p_1\in P_1\setminus \cup_{i\in \Lambda\setminus \{1\}}P_{i}$. Similarly, there exists $p_2\in P_2\setminus \cup_{i\in \Lambda \setminus \{2\}}P_{i}$. Since ${\rm diam}(\Bbb{AG}(R))=2$, either $(Rp_1)(Rp_2)=(0)$ or there exists a non-zero ideal $I$ such that $I\subseteq {\rm Ann}(p_1)\cap {\rm Ann}(p_2)$. If $(Rp_1)(Rp_2)\neq (0)$, then $I(Rp_1+Rp_2)=(0)$. Thus there exists $P_k \in \{P_i: i\in \Lambda\}$ such that $Rp_1+Rp_2\subseteq P_k$, yielding a contradiction. Therefore, $(Rp_1)(Rp_2)=(0)$ and so $p_1p_2=0$. Thus $p_1p_2\in P_3$, so $p_1\in P_3$ or $p_2\in P_3$, yielding a contradiction.
Thus $|\Lambda|\leq 2$. If $(Z(R))^2=(0)$, then since $\Bbb{AG}(R)\not \cong K_2$, by Theorem \ref{1.15}, ${\rm diam}(\Bbb{AG}(R))\leq 1$, yielding a contradiction. We conclude that $Z(R)=P_1\cup P_2$. Then by Lemma \ref{1.10}, $P_1\cap P_2=(0)$. Thus $Z(R)$ is either the union of two primes with intersection $(0)$ or $Z(R)$ is prime such that $(Z(R))^{2} \neq (0)$.

Conversely, if $Z(R)=P$ is a prime ideal, then by {\rm \cite[Theorem 82]{kap}}, there exists a nonzero element $a\in R$ such that $aZ(R)=(0)$. Let $I, J\in V(\Bbb{AG}(R))$. Then $(Ra)I=(Ra)J=(0)$. Therefore, ${\rm diam}(\Bbb{AG}(R)) \leq 2$. If ${\rm diam}(\Bbb{AG}(R) )\leq 1$, then since $\Bbb{AG}(R)\not \cong K_2$, by Theorem \ref{1.15}, $(Z(R))^2=(0)$, yielding a contradiction. Thus ${\rm diam}(\Bbb{AG}(R)) = 2$. Now, we assume that $Z(R)$ is the union of two primes with
intersection $(0)$. Let $Z(R)=P_1\cup P_2$ and $I, J\in V(\Bbb{AG}(R))$. Since $I\subseteq Z(R)=P_1\cup P_2$, by {\rm \cite[Theorem 81]{kap}}, $I\subseteq P_1$ or $I\subseteq P_2$. Similarly, $J\subseteq P_1$ or $J\subseteq P_2$. Without loss of generality we can assume that $J\subseteq P_1$. If $I\subseteq P_2$, then $IJ\subseteq P_1P_2=(0)$. If $I\subseteq P_1$, then $IP_2=JP_2=(0)$. Therefore, ${\rm diam}(\Bbb{AG}(R) )\leq 2$. If ${\rm diam}(\Bbb{AG}(R) )\leq 1$, then since $\Bbb{AG}(R)\not \cong K_2$, by Theorem \ref{1.15}, $Z(R)$ is a prime ideal such that $(Z(R))^2=(0)$, yielding a contradiction. Thus ${\rm diam}(\Bbb{AG}(R)) = 2$.
\end{pproof}

\begin{ttheo}
Let $R$ be a Noetherian ring and $\Bbb{AG}(R)\not \cong K_{2}$. Then the following
conditions are equivalent:

${\rm (1)}$ ${\rm diam}(\Bbb{AG}(R)) = 2$.

${\rm (2)}$ ${\rm diam}(\Bbb{AG}(R[x])) = 2$.

${\rm (3)}$ ${\rm diam}(\Bbb{AG}(R[x_{1},x_{2},...,x_{n}])) = 2$ for all $n > 0$.

${\rm (4)}$ ${\rm diam}(\Bbb{AG}(R[[x]])) = 2$.

${\rm (5)}$ ${\rm diam}(\Gamma(R)) = 2$.

${\rm (6)}$ ${\rm diam}(\Gamma(R[x])) = 2$.

${\rm (7)}$ ${\rm diam}(\Gamma(R[x_{1},x_{2},...,x_{n}])) = 2$ for all $n > 0$.

${\rm (8)}$ ${\rm diam}(\Gamma(R[[x]])) = 2$.

${\rm (9)}$ $Z(R)$ is either the union of two primes with intersection $(0)$, or $Z(R)$ is prime and
$(Z(R))^{2}\neq 0$.
\end{ttheo}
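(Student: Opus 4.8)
The plan is to funnel all nine conditions through condition (9), which is an intrinsic statement about the prime structure of $Z(R)$. Lemma \ref{new} already supplies the equivalence (1) $\Leftrightarrow$ (9), so it suffices to establish two things: (a) condition (9) is unchanged when $R$ is replaced by any of $R[x]$, $R[x_{1},\dots,x_{n}]$ or $R[[x]]$; and (b) an exact analogue of Lemma \ref{new} holds for the zero-divisor graph, i.e. ${\rm diam}(\Gamma(R))=2 \Leftrightarrow$ (9) under the standing hypothesis $\Bbb{AG}(R)\not\cong K_{2}$. Granting (a) and (b), I apply Lemma \ref{new} to each of the Noetherian rings $R[x]$, $R[x_{1},\dots,x_{n}]$, $R[[x]]$ (all Noetherian, by the Hilbert basis theorem and its power-series version) to get (2),(3),(4) $\Leftrightarrow$ (9), and apply the analogue from (b) to the same rings to get (5),(6),(7),(8) $\Leftrightarrow$ (9); chaining through (9) gives the full equivalence.

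For (a): since $R$ is Noetherian, $Z(R)=\bigcup_{i\in\Lambda}P_{i}$ with $\Lambda$ finite and the $P_{i}$ the maximal primes in $Z(R)$ (\cite[Theorem 80]{kap}). I would first show $Z(R[x])=\bigcup_{i}P_{i}[x]$ via McCoy's observation that $f\in Z(R[x])$ iff $cf=0$ for some $0\neq c\in R$: if $cf=0$ then ${\rm Ann}(c)$ is a nonzero ideal contained in $Z(R)$, hence in a single $P_{j}$ (\cite[Theorem 81]{kap}), so every coefficient of $f$ lies in $P_{j}$ and $f\in P_{j}[x]$; conversely, each maximal prime $P_{i}$ is the annihilator of some $0\neq c_{i}$, giving $c_{i}P_{i}=(0)$ and hence $P_{i}[x]\subseteq Z(R[x])$. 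Thus the $P_{i}[x]$ are exactly the maximal primes in $Z(R[x])$. Because $P[x]$ is prime iff $P$ is, and $(P_{i}[x])^{2}=P_{i}^{2}[x]$ and $P_{i}[x]\cap P_{j}[x]=(P_{i}\cap P_{j})[x]$, each clause of (9) holds for $R[x]$ iff it holds for $R$. Iterating gives the statement for $R[x_{1},\dots,x_{n}]$, and the same argument with $P[[x]]$ in place of $P[x]$ gives it for $R[[x]]$.

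For (b): I would transcribe Lemmas \ref{1.10} and \ref{new} into the element setting, replacing a vertex ideal $Rp$ by the element $p$ and a common-neighbour ideal $I$ by ${\rm Ann}(z)$ for a common neighbour $z$ of $p_{1},p_{2}$ in $\Gamma(R)$. If ${\rm diam}(\Gamma(R))=2$ and $Z(R)$ had more than two maximal primes, I pick $p_{1}\in P_{1}\setminus\bigcup_{i\neq 1}P_{i}$ and $p_{2}\in P_{2}\setminus\bigcup_{i\neq 2}P_{i}$; if $p_{1}p_{2}\neq 0$, a common neighbour $z$ yields ${\rm Ann}(z)\supseteq (p_{1},p_{2})$, an ideal inside $Z(R)$ hence inside one $P_{k}$, forcing $k=1$ and $k=2$ simultaneously, while $p_{1}p_{2}=0$ puts $p_{1}p_{2}\in P_{3}$ and contradicts primeness; so $|\Lambda|\leq 2$. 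The case $(Z(R))^{2}=(0)$ is ruled out because then $\Gamma(R)$ is complete and ${\rm diam}\leq 1$; here $\Bbb{AG}(R)\not\cong K_{2}$ is essential, since as in the proof of Theorem \ref{1.15} it excludes $R\cong\Bbb{Z}_{2}\times\Bbb{Z}_{2}$ and hence, via \cite[Theorem 2.8]{an-li}, makes ``$\Gamma(R)$ complete'' equivalent to $(Z(R))^{2}=(0)$. The two-prime case is closed by the element analogue of Lemma \ref{1.10}. Conversely, if (9) holds and $Z(R)$ is prime, \cite[Theorem 82]{kap} gives $0\neq a$ with $aZ(R)=(0)$, so $a$ is adjacent to every other vertex and ${\rm diam}(\Gamma(R))\leq 2$; in the two-prime case $P_{1}P_{2}\subseteq P_{1}\cap P_{2}=(0)$ lets any nonzero element of one prime serve as a common neighbour. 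In both cases ${\rm diam}=2$, because ${\rm diam}\leq 1$ would force $\Gamma(R)$ complete, i.e. $(Z(R))^{2}=(0)$, again contradicting (9).

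To legitimately apply these lemmas to the extension rings I must also know that $\Bbb{AG}$ of each extension is again $\not\cong K_{2}$; this is easy, since $\Bbb{AG}(R)\not\cong K_{2}$ with $R$ not a domain produces nonzero $a,b$ with $ab=0$, whereupon $aR[x]$, $bR[x]$, $axR[x]$ are three distinct annihilating ideals and $\Bbb{AG}(R[x])$ has at least three vertices (and likewise for the other extensions). I expect the main obstacle to be the power-series case: establishing that the maximal primes of $Z(R[[x]])$ are exactly the $P_{i}[[x]]$ for Noetherian $R$, so that (9) transfers, is the one step that does not follow from McCoy's elementary argument and instead needs the associated-prime theory of $R[[x]]$; every other step is a routine transcription of the already-proved ideal statements into the element setting.
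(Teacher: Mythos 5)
Your proposal is correct in substance but takes a genuinely more self-contained route than the paper. The paper's entire proof is a two-line citation: Lemma \ref{new} gives $(1)\Leftrightarrow(9)$, and \cite[Theorem 3.11]{axt} is invoked wholesale for the equivalence of $(5)$--$(8)$ with $(9)$ and, implicitly, for the fact that condition $(9)$ passes between $R$ and its polynomial and power-series extensions (which is what lets Lemma \ref{new}, applied to $R[x]$, $R[x_1,\dots,x_n]$ and $R[[x]]$, deliver $(2)$--$(4)$). You instead reprove the content of that citation: your part (a) is the transfer of $(9)$ via $Z(R[x])=\bigcup P_i[x]$ (McCoy plus \cite[Theorems 80, 81]{kap}), and your part (b) is essentially a re-derivation of the Axtell--Coykendall--Stickles characterization of ${\rm diam}(\Gamma(R))=2$ by transcribing Lemmas \ref{1.10} and \ref{new} to the element level. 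What your route buys is transparency: you make explicit two points the paper passes over in silence, namely that $\Bbb{AG}$ of each extension ring is again $\not\cong K_2$ (needed to legitimately apply Lemma \ref{new} there) and exactly where the hypothesis $\Bbb{AG}(R)\not\cong K_2$ enters on the $\Gamma$ side (to exclude $\Bbb{Z}_2\times\Bbb{Z}_2$ in \cite[Theorem 2.8]{an-li}, as in Theorem \ref{1.15}). What it costs is the one step you honestly flag: the identity $Z(R[[x]])=\bigcup P_i[[x]]$ does not follow from McCoy's elementary argument and needs the Noetherian power-series analogue (every zero-divisor of $R[[x]]$ is killed by a nonzero constant), which you defer to ``associated-prime theory'' without proof. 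That is exactly the content the paper outsources to \cite{axt}, so it is an acceptable deferral rather than a fatal gap, but as written your proof of $(4)\Leftrightarrow(9)$ and $(8)\Leftrightarrow(9)$ is incomplete until that lemma is either proved or explicitly cited.
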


\begin{pproof}
It follows from Lemma \ref{new} and {\rm \cite[Theorem 3.11]{axt}}.\hfill $\square$
\end{pproof}

\section {A Characterization of the Ring $R$ When ${\rm gr}(\Bbb{AG}(R))\geq
4$}
In \cite[Section 3]{aal1}, the authors have studied rings whose annihilating-ideal graphs are bipartite. In this section, we characterize rings $R$ when ${\rm gr}(\Bbb{AG}(R))\geq 4$, and so we characterize rings whose annihilating-ideal graphs are bipartite.

\begin{ppro}\label{cnew}
Let $R$ be a reduced ring. Then the following statements are equivalent.

{\rm (1)} $Z(R)$ is the union of two primes with intersection $(0)$.

{\rm (2)} $\Bbb{AG}(R)$ is a complete bipartite graph.
\end{ppro}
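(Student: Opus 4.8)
The plan is to prove both implications directly from the definition of $\Bbb{AG}(R)$, using the reducedness of $R$ to turn statements about products of ideals into statements about primes. Throughout I use that every vertex $I$ of $\Bbb{AG}(R)$ satisfies $I\subseteq Z(R)$ (each element of an annihilating ideal kills the nonzero witnessing ideal), and that reducedness forbids $I^2=(0)$ for a nonzero ideal $I$.

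For $(1)\Rightarrow(2)$, suppose $Z(R)=P_1\cup P_2$ with $P_1,P_2$ distinct primes and $P_1\cap P_2=(0)$. By prime avoidance \cite[Theorem 81]{kap} each vertex lies in $P_1$ or $P_2$, and since a nonzero ideal cannot sit in $P_1\cap P_2=(0)$, the vertex set splits as $V_1=\{I:I\subseteq P_1\}$ and $V_2=\{I:I\subseteq P_2\}$. Cross edges are automatic because $IJ\subseteq P_1P_2\subseteq P_1\cap P_2=(0)$, while the absence of edges inside a part is exactly primeness: if $I,J\subseteq P_1$ were adjacent, then choosing $a\in I\setminus P_2$ and $b\in J\setminus P_2$ would give $ab=0\in P_2$, contradicting that $P_2$ is prime. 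Checking that both parts are nonempty (take $0\neq a\in P_1\setminus P_2$; then $(Ra)$ is a vertex in $V_1$, and symmetrically for $V_2$) shows $\Bbb{AG}(R)$ is complete bipartite.

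The substantive direction is $(2)\Rightarrow(1)$. First note both parts are nonempty, since a reduced ring with a vertex already has an edge: if $I$ is annihilating, pick $J\neq(0)$ with $IJ=(0)$; reducedness forces $I\neq J$, so $I$–$J$ is an edge. The key structural observation is that in a complete bipartite graph the vertices of one part share a neighbourhood. Concretely, fixing $J_0\in V_2$, every $I\in V_1$ satisfies $IJ_0=(0)$, so $I\subseteq {\rm Ann}(J_0)$; moreover ${\rm Ann}(J_0)$ is itself a vertex adjacent to $J_0$, hence lies in $V_1$, so it is the unique maximal vertex of $V_1$ (and is independent of the chosen $J_0$). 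I would set $P_1$ to be this maximal vertex of $V_1$ and $P_2$ the maximal vertex of $V_2$; one checks $P_1={\rm Ann}(P_2)$, $P_2={\rm Ann}(P_1)$, $P_1P_2=(0)$, and every ideal of $V_i$ is contained in $P_i$.

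From here the three required facts follow. First, $Z(R)=P_1\cup P_2$: for $\subseteq$, any $0\neq a\in Z(R)$ yields a vertex $(Ra)\in V_1\cup V_2$, so $a\in P_1\cup P_2$; for $\supseteq$, each $P_i$ annihilates the nonzero ideal $P_{3-i}$. Second, $P_1\cap P_2=(0)$: for $x$ in the intersection, $(Rx)\subseteq P_1$ and $(Rx)\subseteq P_2$ give $(Rx)^2\subseteq P_1P_2=(0)$, so $x^2=0$ and reducedness forces $x=0$; in particular $P_1\neq P_2$ since each is a nonzero vertex. The main obstacle is the third fact, that $P_1$ (hence by symmetry $P_2$) is prime. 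To prove it I would take $ab\in P_1$ with $a\notin P_1$ and split on $a$: if $a$ is regular, then $abP_2=(0)$ cancels to $bP_2=(0)$, giving $b\in {\rm Ann}(P_2)=P_1$; if $a$ is a zero-divisor, then $a\in P_2$, so $ab\in P_1\cap P_2=(0)$ and $b$ is a zero-divisor lying in $P_1\cup P_2$. The delicate point is to exclude $b\in P_2\setminus P_1$, for there $a,b$ would be nonzero elements of $P_2$ with $ab=0$, whence $(Ra),(Rb)$ are distinct (again by reducedness) adjacent vertices both contained in $P_2$, i.e.\ both in $V_2$, contradicting that no edge joins two vertices of the same part. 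This is precisely where complete bipartiteness and reducedness are played against each other, and I expect it to be the crux of the argument; everything else is bookkeeping.
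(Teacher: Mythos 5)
Your proof is correct and follows essentially the same route as the paper: in both directions the two primes are the two ``parts'' of the bipartition, and the crux of $(2)\Rightarrow(1)$ --- that $ab=0$ with $a,b$ nonzero elements of $P_2$ would produce two distinct adjacent vertices $Ra, Rb$ inside one part --- is exactly the paper's argument. Your only real deviation is a pleasant streamlining: defining $P_i$ as ${\rm Ann}(J_0)$ for $J_0$ in the opposite part makes $P_i$ an ideal for free and identifies it at once as the maximum of its part, whereas the paper takes $P_i$ to be the union of the ideals in that part and must verify by hand that this union is closed under addition.
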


\begin{pproof}
${\rm (1)} \Rightarrow {\rm (2)}$ Let $Z(R)=P_1\cup P_2$, where $P_1$ and $P_2$ are prime ideals such that $P_1\cap P_2=(0)$. Therefore, $P_1P_2=(0)$. Since ${\rm Ann}(P_1)P_1=(0)\subseteq P_2$ and $P_1 \nsubseteq P_2$, we conclude that ${\rm Ann}(P_1)\subseteq P_2$. Note that $P_2\subseteq {\rm Ann}(P_1)$, so ${\rm Ann}(P_1)=P_2$. Similarly ${\rm Ann}(P_2)=P_1$. Let $V_1=\Bbb{I}(P_1)$ and $V_2=\Bbb{I}(P_2)$. Let $J_1,J_2\in V_2$ be two nonzero ideals such that $J_1J_2=(0)$. Then $J_1J_2\subseteq P_1$, and so $J_1\in P_1$ or $J_2\in P_1$, yielding a contradiction since $P_1\cap P_2=(0)$. Therefore, every non-zero ideals $J_1$ and $J_2$ in $V_2$ are not adjacent. Similarly, every  $I_1$ and $I_2$ in $ V_1$  are not adjacent. Since $P_1\cap P_2=(0)$, ${\rm Ann(P_1)}=P_2$ and ${\rm Ann(P_2)}=P_1$, we can conclude that $\Bbb{AG}(R)\cong K_{|V_1|-1,|V_2|-1}$.

${\rm (2)} \Rightarrow {\rm (1)}$ Let $\Bbb{AG}(R)\cong K_{|V_1|,|V_{2}|}$ such that $V_1=\{I_{i}\in \Bbb{A}(R): i\in \Lambda_{1}\}\setminus \{(0)\}$ and $V_2=\{J_{j}\in \Bbb{A}(R): j\in \Lambda_{2}\}\setminus \{(0)\}$. Let $P_{1}=\bigcup_{i\in \Lambda_{1}}I_{i} $ and
$P_{2}=\bigcup_{j\in \Lambda_{2}}J_{j} $. Therefore, $Z(R)=P_{1}\cup
P_{2}$. Let $a_1,a_2\in P_1$. Then there exist ideals $I_1,I_2\subseteq P_1$ such that $a_1\in I_1$ and $a_2\in I_2$. Since for every ideal $J_j\subseteq P_2$, $J_j(R(a+b))\subseteq J(I_1+I_2)=(0)$, we conclude that $R(a+b)\subseteq P_1$. Thus $a+b\in P_1$. Also, it is easy to see that for every $r\in R$ and $a\in P_1$, $ra\in P_1$, so $P_1$ is an ideal. Similarly $P_2$ is an ideal. Let $P_{1}\cap P_{2}\neq (0)$. Since $P_1P_2=(0)$, $(P_{1}\cap P_{2})Z(R)= (P_{1}\cap
P_{2})(P_{1}\cup P_{2})=(0)$. Thus $Z(R)$ is an ideal, yielding a contradiction. Therefore,
$P_{1}\cap P_{2}=0$. Now, we show that $P_1$ and $P_2$ are prime ideals. Let $ab\in P_1$ and $a,b\not \in P_1$. Since $ab\in Z(R)$, $a\in Z(R)$ or $b\in Z(R)$. Without loss of generality we assume that $a\in Z(R)$. Since $Z(R)=P_1\cup P_2$ and $a\not \in P_1$, we conclude that $a\in P_2$. Hence $ab\in P_2$. Since $ab\in P_1\cap P_2=(0)$, $ab=0$. If $Ra=Rb$, then $a^2=0$, yielding a contradiction since $R$ is a reduced ring. Thus $Ra\neq Rb$. Since $Ra\subseteq P_2$, $Ra\in \{J_{j}: j\in
\Lambda_{2}\}$. Hence $Rb\in \{I_{i}: i\in \Lambda_{1}\}$. Thus $Rb\in P_1$, yielding a contradiction since $b\not \in P_1$. Therefore, $P_1$ is a prime ideal. Similarly $P_2$ is a prime ideal. So, $Z(R)=P_1\cup P_2$, where $P_1$ and $P_2$ are prime ideals such that $P_1\cap P_2=(0)$. \hfill$\square$
\end{pproof}

\begin{ttheo}\label{3.4}
The following statements are equivalent for a reduced ring $R$.

{\rm (1)} ${\rm gr}(\Bbb{AG}(R))=4$.

{\rm (2)} $\Bbb{AG}(R)\cong K_{|V_1|,|V_2|}$, where $|V_1|,|V_2|\geq 2$.

 {\rm (3)} $Z(R)$ is the union of two primes $P_1$ and $P_2$ with intersection $(0)$ and $|\Bbb{I}(P_1)|,|\Bbb{I}(P_2)|\geq 3$.
\end{ttheo}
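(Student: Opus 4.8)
The plan is to use the fact (from \cite{be-ra1}, recorded in the introduction) that ${\rm gr}(\Bbb{AG}(R))\in\{3,4,\infty\}$, so that condition (1) is equivalent to saying that $\Bbb{AG}(R)$ contains a cycle yet is triangle-free. I would then close the loop $(1)\Rightarrow(3)\Rightarrow(2)\Rightarrow(1)$. The two implications $(3)\Rightarrow(2)$ and $(2)\Rightarrow(3)$ are essentially Proposition \ref{cnew}: when $Z(R)=P_1\cup P_2$ with $P_1\cap P_2=(0)$, the proof of that proposition exhibits $\Bbb{AG}(R)\cong K_{|\Bbb{I}(P_1)|-1,\,|\Bbb{I}(P_2)|-1}$, so the size condition $|\Bbb{I}(P_i)|\geq 3$ corresponds exactly to each part of the bipartition having at least two vertices. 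For $(2)\Rightarrow(1)$ I would argue purely graph-theoretically: a complete bipartite graph $K_{m,n}$ with $m,n\geq 2$ is bipartite (hence has no triangle, so its girth is $\neq 3$) and contains the $4$-cycle $I_1-J_1-I_2-J_2-I_1$ on two vertices from each side (so its girth is $\neq\infty$); thus its girth is exactly $4$.

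The substance is $(1)\Rightarrow(3)$. Write $Z(R)=\bigcup_{i\in\Lambda}P_i$ with the $P_i$ the primes maximal in $Z(R)$; for a reduced ring these are precisely the minimal primes of $R$ and $\bigcap_{i\in\Lambda}P_i=nil(R)=(0)$. First, $|\Lambda|\geq 2$: if $\Lambda$ had a single element then $R$ would be a domain and $\Bbb{AG}(R)$ empty, contradicting the presence of a cycle. Next, I would rule out $|\Lambda|\geq 3$ by producing a triangle. Given three distinct minimal primes $P_1,P_2,P_3$, set $A_i=\bigcap_{j\neq i}P_j$. Then $A_iA_k\subseteq A_i\cap A_k\subseteq\bigcap_{j\in\Lambda}P_j=(0)$ for $i\neq k$, so $A_1,A_2,A_3$ are pairwise adjacent. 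To see they are genuine, distinct vertices: choosing $p_i\in P_i\setminus\bigcup_{j\neq i}P_j$ (which lies in $Z(R)$, so $\Ann(p_i)\neq(0)$) one checks $\Ann(p_i)\subseteq A_i$, whence $A_i\neq(0)$; and $A_i\subseteq P_k$ for every $k\neq i$ while $A_i\not\subseteq P_i$, so the three ideals are pairwise distinct. This triangle contradicts triangle-freeness, forcing $|\Lambda|=2$.

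With $|\Lambda|=2$ we have $Z(R)=P_1\cup P_2$ and $P_1\cap P_2=\bigcap_{i}P_i=(0)$, so Proposition \ref{cnew} gives $\Bbb{AG}(R)\cong K_{|\Bbb{I}(P_1)|-1,\,|\Bbb{I}(P_2)|-1}$. Since $\Bbb{AG}(R)$ contains a cycle, neither side of the bipartition can be a single vertex (a complete bipartite graph with a part of size $\leq 1$ is a tree), so $|\Bbb{I}(P_i)|-1\geq 2$, i.e.\ $|\Bbb{I}(P_i)|\geq 3$, which is (3). The step I expect to be most delicate is the triangle construction when $|\Lambda|\geq 3$: the existence of $p_i\in P_i\setminus\bigcup_{j\neq i}P_j$ together with the non-vanishing $A_i\neq(0)$ and the incomparability $A_i\not\subseteq P_i$ are immediate from prime avoidance and the prime-containment-of-products trick when $\Lambda$ is finite, but require extra care if $\Lambda$ is infinite; handling (or reducing to) the finitely-many-minimal-primes situation is the main obstacle.
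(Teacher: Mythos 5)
Your reductions $(3)\Leftrightarrow(2)$ via Proposition \ref{cnew} and $(2)\Rightarrow(1)$ by inspecting $K_{m,n}$ with $m,n\geq 2$ are fine and agree with what the paper does. The problem lies in the implication you yourself call the substance, $(1)\Rightarrow(3)$, and precisely at the step you flag at the end: your argument genuinely requires $R$ to have finitely many minimal primes, and nothing in the hypotheses supplies this. The theorem carries no Noetherian or finiteness assumption (unlike Lemma \ref{new}, which is stated only for Noetherian rings), so the case $|\Lambda|$ infinite must be dealt with, and there your triangle construction breaks in two places: the element $p_i\in P_i\setminus\bigcup_{j\neq i}P_j$ is produced by prime avoidance, which fails for infinite unions; and the ideal $A_i=\bigcap_{j\neq i}P_j$ has no reason to be nonzero once the intersection runs over infinitely many primes (your proof that $A_i\neq(0)$ goes through ${\rm Ann}(p_i)\subseteq A_i$, so it also presupposes $p_i$). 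Acknowledging the obstacle is not the same as overcoming it; as written, the proof is incomplete.

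The paper closes exactly this gap by taking a different route for the hard implication: it proves $(1)\Rightarrow(2)$ directly and never mentions minimal primes. One first shows ${\rm diam}(\Bbb{AG}(R))=2$ (diameter $0$ or $1$ forces a complete graph, hence girth $3$ or $\infty$; a diameter-$3$ path $I_1-I_2-I_3-I_4$ yields the triangle $I_2-I_3-I_1I_4-I_2$, reducedness being used to see that $I_1I_4$ is a new vertex). Then, fixing a vertex $I$ on a $4$-cycle, one checks that $V_1=\{T: T\subseteq {\rm Ann}(I)\}$ and $V_2=\{S: S\nsubseteq {\rm Ann}(I)\}$ give a complete bipartition, using only triangle-freeness, ${\rm diam}=2$ and reducedness; Proposition \ref{cnew} then recovers the two primes of (3). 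If you want to keep your minimal-prime strategy, you must first prove, without any finiteness assumption, that a reduced ring whose annihilating-ideal graph is triangle-free has at most two minimal primes --- for instance by showing that for any edge $I-J$ of a triangle-free $\Bbb{AG}(R)$ with $R$ reduced the annihilators ${\rm Ann}(I)$ and ${\rm Ann}(J)$ are prime (if $ABI=(0)$ with $AI\neq (0)\neq BI$, then $J-AI-BI-J$ is a triangle) --- but this is essentially the paper's argument in disguise.
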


\begin{pproof}
${\rm (1)}\Rightarrow {\rm (2)}$ First, we show that ${\rm diam}(\Bbb{AG}(R))=2$. If ${\rm diam}(\Bbb{AG}(R))=0$ or $1$, then $\Bbb{AG}(R)$ is a complete graph and so ${\rm gr}(\Bbb{AG}(R))$ is $3$ or $\infty$, yielding a contradiction. If ${\rm diam}(\Bbb{AG}(R))=3$, then there exist $I_1, I_2, I_3, I_4\in \Bbb{A}(R)$ such that $I_1-I_2-I_3-I_4$, $I_1I_3\neq (0)$, $I_2I_4\neq (0)$ and $I_1I_4\neq (0)$. If $I_1I_4= I_2$, then since $(I_1I_4)I_2=(0)$, $(I_2)^{2}=(0)$, yielding a contradiction. Similarly $I_1I_4\neq I_3$. Thus $I_2-I_3-I_1I_4-I_2$ is a cycle and so ${\rm gr}(\Bbb{AG}(R))=3$, yielding a contradiction. Therefore, ${\rm diam}(\Bbb{AG}(R))=2$. We now show that $\Bbb{AG}(R)$ is a complete bipartite graph. Since ${\rm gr}(\Bbb{AG}(R))=4$, there exist $I,J,K,L\in \Bbb{A}(R)$ such that $I-J-K-L-I$. We show that $\Bbb{AG}(R)\cong K_{|V_1|, |V_2|}$, where $V_1=\{T\in \Bbb{A}(R)^{*}: T\subseteq {\rm Ann}(I)\}$ and $V_2=\{S\in \Bbb{A}(R)^{*}: S\nsubseteq {\rm Ann}(I)\}$. Let $T, T_1\in V_1$ and $S, S_1\in V_2$. Then $IT=(0)$ and $IS\neq (0)$. Assume that $TS\neq (0)$. Since ${\rm diam}(\Bbb{AG}(R))=2$, there exists $H\in \Bbb{A}(R)$ such that $I-H-S$. If $TS=H$ or $TS=I$, then $(TS)^{2}=(0)$, yielding a contradiction. Therefore, $I-TS-H-I$ is a cycle,  contrary to ${\rm gr}(\Bbb{AG}(R))=4$. Thus $TS=(0)$. If $TT_1=(0)$, then $I-T-T_1-I$ is a cycle, yielding a contradiction. So, $TT_1\neq (0)$. Similarly $SS_1\neq (0)$. Also $V_1\cap V_2=\emptyset$. Therefore, $\Bbb{AG}(R)\cong K_{|V_1|, |V_2|}$ and so $\Bbb{AG}(R)$ is a complete bipartite graph, and by Proposition \ref{cnew} $Z(R)$ is the union of two primes with intersection $\{(0)\}$.

${\rm (2)}\Rightarrow {\rm (1)}$ Clear.

${\rm (2)}\Leftrightarrow {\rm (3)}$ It follows from Proposition \ref{cnew}. \hfill$\square$
\end{pproof}

\begin{ttheo}\label{infty} 
The following statements are equivalent for a reduced ring $R$.

{\rm (1)} $\Bbb{AG}(R)$ is nonempty with ${\rm gr}(\Bbb{AG}(R))=\infty$.

{\rm (2)} There is a vertex which is adjacent to every vertex of $\Bbb{AG}(R)$.

{\rm (3)} $R \cong {\Bbb K} \times D$, where ${\Bbb K}$ is a field and $D$ is an integral domain.

{\rm (4)} $\Bbb{AG}(R)$ is a star graph.
\end{ttheo}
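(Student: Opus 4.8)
The plan is to make condition (4) the hub and to prove $(1)\Leftrightarrow(4)$, $(2)\Leftrightarrow(4)$, and $(3)\Leftrightarrow(4)$. The key observation is that a star graph is exactly the complete bipartite graph $K_{1,n}$, so Proposition \ref{cnew} is the bridge between the graph-theoretic conditions and the ring-theoretic condition (3): a star forces $Z(R)$ to be a union of two primes meeting in $(0)$. Throughout I use that $\Bbb{AG}(R)$ is connected with ${\rm diam}(\Bbb{AG}(R))\le 3$, so ${\rm gr}(\Bbb{AG}(R))=\infty$ means precisely that $\Bbb{AG}(R)$ is a tree.

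For $(1)\Rightarrow(4)$ I would first rule out diameter $3$ exactly as in the proof of Theorem \ref{3.4}: from a shortest path $I_1-I_2-I_3-I_4$ the ideal $K=I_1I_4$ satisfies $KI_2=KI_3=(0)$ and, since $R$ is reduced, $K\neq I_2,I_3$, producing the triangle $I_2-I_3-K-I_2$ and contradicting ${\rm gr}(\Bbb{AG}(R))=\infty$. Hence ${\rm diam}(\Bbb{AG}(R))\le 2$, and a connected acyclic graph with at least one edge and diameter $\le 2$ is a star. The reverse $(4)\Rightarrow(1)$ is immediate since a star is a nonempty acyclic tree, and $(4)\Rightarrow(2)$ holds because the center of the star is adjacent to every other vertex. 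For $(2)\Rightarrow(4)$, let $I_0$ be a vertex adjacent to all others and put $A={\rm Ann}(I_0)$; reducedness gives $I_0\cap A=(0)$ (as $(I_0\cap A)^2\subseteq I_0 A=(0)$) and $I_0^2\neq(0)$. If two vertices $J,K\neq I_0$ were adjacent, then $(I_0+K)J=(0)$ shows $I_0+K$ is a vertex, while $I_0(I_0+K)=I_0^2\neq(0)$; by universality $I_0+K=I_0$, forcing $K\subseteq I_0\cap A=(0)$, a contradiction. Hence $\Bbb{AG}(R)$ is a star.

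It remains to treat $(3)\Leftrightarrow(4)$. For $(3)\Rightarrow(4)$ a direct computation in $R\cong\Bbb{K}\times D$ shows that the nonzero annihilating ideals are exactly $\Bbb{K}\times(0)$ and the ideals $(0)\times I$ with $(0)\neq I\trianglelefteq D$, that $\Bbb{K}\times(0)$ is adjacent to all of them, and that no two of the latter are adjacent since $D$ is a domain; thus $\Bbb{AG}(R)$ is a star centered at $\Bbb{K}\times(0)$. For $(4)\Rightarrow(3)$, Proposition \ref{cnew} gives $Z(R)=P_1\cup P_2$ with $P_1,P_2$ primes and $P_1\cap P_2=(0)$, and the isomorphism type $K_{|\Bbb{I}(P_1)|-1,\,|\Bbb{I}(P_2)|-1}$ being a star forces, say, $\Bbb{I}(P_1)=\{(0),P_1\}$.

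The main obstacle is this last step: converting the purely combinatorial statement that $P_1$ has no nonzero proper subideal into the algebraic splitting $R\cong\Bbb{K}\times D$, where reducedness is essential. Since $P_1$ is a minimal nonzero ideal it is principal, $P_1=Ra$, and $P_1^2\in\{(0),P_1\}$; reducedness excludes $P_1^2=(0)$ (else $P_1\subseteq{\rm Ann}(P_1)=P_2$, so $P_1\subseteq P_1\cap P_2=(0)$), whence $P_1^2=P_1$. Writing $a=ra^2$ and setting $e=ra$ yields a nonzero idempotent with $P_1=eR$, so $R=eR\times(1-e)R$; minimality of $P_1$ makes $eR$ a field $\Bbb{K}$, while $(1-e)R\cong R/P_1$ is a domain $D$ because $P_1$ is prime. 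This gives $R\cong\Bbb{K}\times D$ and closes the equivalences.
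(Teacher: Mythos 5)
Your proof is correct, but it follows a genuinely different route from the paper's. The paper runs the cycle $(1)\Rightarrow(2)\Rightarrow(3)\Rightarrow(4)\Rightarrow(1)$: it proves $(1)\Rightarrow(2)$ by a direct cycle construction (from a path $I_1-I_2-I_3-I_4$ witnessing the failure of (2), it builds a cycle through $I_1I_4$), then disposes of $(2)\Rightarrow(3)$ entirely by citing \cite[Corollary 2.3]{be-ra1}, and declares the remaining implications clear. You instead make (4) the hub and keep everything self-contained: your $(1)\Rightarrow(4)$ excludes diameter $3$ exactly as in Theorem \ref{3.4} and then invokes the purely combinatorial fact that an acyclic connected graph of diameter at most $2$ with an edge is a star; your $(2)\Rightarrow(4)$ is a clean direct argument with ${\rm Ann}(I_0)$ and the ideal $I_0+K$; and your $(4)\Rightarrow(3)$ routes through Proposition \ref{cnew} and a Brauer-type idempotent splitting, which in effect reproves the external Corollary 2.3 rather than citing it. What each buys: the paper's version is shorter but leans on an outside reference for the only substantive ring-theoretic implication (and its written cycle $I_1-I_2-I_3-I_1I_4-I_1$ tacitly uses the edge $I_1I_4-I_1$, i.e.\ $I_1^2I_4=(0)$, which is not justified -- your triangle $I_2-I_3-I_1I_4-I_2$ avoids this); your version is longer but verifiable from results already in the paper. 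One small point you should make explicit: in $(1)\Rightarrow(4)$ you need the graph to have at least one edge (equivalently at least two vertices), which holds because a lone vertex $I$ of $\Bbb{AG}(R)$ would force $I^2=(0)$, impossible in a reduced ring.
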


\begin{pproof}
$(1)\Rightarrow (2)$ Suppose to the contrary that there is not a vertex which is adjacent to every vertex of $\Bbb{AG}(R)$. Therefore, there exist distinct vertices $I_1, I_2, I_3, I_4$ such that $I_1-I_2-I_3-I_4$, $I_1I_3\neq (0)$, and $I_2I_4\neq (0)$. If $I_1I_4=(0)$, then $I_1-I_2-I_3-I_4-I_1$ is a cycle, contrary to ${\rm gr}(\Bbb{AG}(R))=\infty$. So we may assume that $I_1I_4\neq (0)$. Therefore, $I_1-I_2-I_3-I_1I_4-I_1$ is a cycle, contrary to ${\rm gr}(\Bbb{AG}(R))=\infty$. Therefore, there exists a vertex which is adjacent to every vertex of $\Bbb{AG}(R)$.

$(2)\Rightarrow (3)$ It follows from \cite[Corollary 2.3]{be-ra1}.

$(3)\Rightarrow (4)$ Clear.

$(4)\Rightarrow (1)$ Clear.\hfill$\square$
\end{pproof}

\begin{ttheo}
The following statements are equivalent for a non-reduced ring $R$.

{\rm (1)} $\Bbb{AG}(R)$ is nonempty with ${\rm gr}(\Bbb{AG}(R))=\infty$.

{\rm (2)} One of the following occurs:

{\rm (a)} Either $R\cong \frac{\Bbb{K}[x]}{(x^2)}$, where $\Bbb{K}$ is a field or $R\cong L$, where $L$ is a coefficient ring of 
characteristic $p^2$.

{\rm (b)} $R\cong R_1\times R_2$ such that $R_1$ is a field and either $R_2\cong \frac{\Bbb{K}[x]}{(x^2)}$, where $\Bbb{K}$ is a 
field or $R_2\cong L$, where $L$ is a coefficient ring of characteristic $p^2$.

{\rm (c)} $Z(R)$ is an annihilating ideal and if $IJ=(0)$ and $I\neq J$, then $I={\rm Ann}(Z(R))$ or $J={\rm Ann}(Z(R))$.

{\rm (3)} One of the following occurs:

{\rm (a)} $\Bbb{AG}(R)\cong K_1$.

{\rm (b)} $\Bbb{AG}(R)\cong P_4$. 

{\rm (c)} $\Bbb{AG}(R)\cong K_{1,n}$ for some $n\geq 1$.
\end{ttheo}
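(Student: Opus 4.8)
The plan is to establish the equivalence of the three blocks $(1)$, $(2)$, $(3)$ for a non-reduced ring $R$ by proving a cycle of implications, using the same girth-forcing arguments that drove Theorem \ref{infty} together with the classification results quoted in the introduction. First I would handle $(1)\Rightarrow(3)$, which is the structural heart. Assuming $\Bbb{AG}(R)$ is nonempty with ${\rm gr}(\Bbb{AG}(R))=\infty$, the graph is a forest; since it is connected with ${\rm diam}(\Bbb{AG}(R))\le 3$, it must be a tree of diameter at most $3$. A tree of diameter $0$ is $K_1$; of diameter $1$ is $K_2\cong K_{1,1}$; of diameter $2$ is a star $K_{1,n}$; and of diameter $3$ is a \emph{double star}, i.e.\ two centers joined by an edge with pendant leaves on each. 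The key reduction is to rule out the genuinely branching double stars and leave only $P_4$ as the diameter-$3$ possibility. To do this I would argue as in Theorem \ref{infty}$(1)\Rightarrow(2)$: if a path $I_1-I_2-I_3-I_4$ exists, then considering the ideal products $I_1I_3$, $I_2I_4$, $I_1I_4$ and their possible equalities to existing vertices forces either a $3$-cycle or a $4$-cycle unless the endpoints and centers are tightly constrained; in the non-reduced case one cannot generally contract the path to a triangle, so the surviving configuration is exactly $P_4$. This yields cases $(a)$ $\Bbb{AG}(R)\cong K_1$, $(b)$ $\Bbb{AG}(R)\cong P_4$, $(c)$ $\Bbb{AG}(R)\cong K_{1,n}$.

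Next I would prove $(3)\Rightarrow(2)$ by translating each graph shape into the ring-theoretic statement. If $\Bbb{AG}(R)\cong K_1$ then there is a single annihilating ideal, which by \cite[Remark 10]{aal} (quoted in the introduction) forces $R\cong \Bbb{K}[x]/(x^2)$ or $R\cong L$ a coefficient ring of characteristic $p^2$; this is case $(a)$. For $\Bbb{AG}(R)\cong K_{1,n}$ the unique center is a vertex adjacent to every other, namely an annihilating ideal equal to ${\rm Ann}(Z(R))$, and one checks that $Z(R)$ is itself an annihilating ideal with the stated adjacency property, giving case $(c)$; when $n=1$ this overlaps the decomposable case. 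For $\Bbb{AG}(R)\cong P_4$, write the path and read off that $R$ decomposes: one factor contributes a single isolated annihilating structure and the other a field, producing case $(b)$ $R\cong R_1\times R_2$. Here I would lean on the decomposition machinery used in \cite[Corollary 2.3]{be-ra1} to split $R$ into a product once a sufficiently rich annihilator structure is present.

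Finally $(2)\Rightarrow(1)$ is the direct verification: in each of $(a)$, $(b)$, $(c)$ I would write down $\Bbb{AG}(R)$ explicitly and observe it is acyclic. Case $(a)$ gives $K_1$ (a single vertex, no cycle); case $(b)$ gives a $P_4$ coming from $R_1\times R_2$ with $R_2$ having a single nontrivial annihilating ideal, whose annihilating-ideal graph is a path on four vertices; case $(c)$ gives a star $K_{1,n}$ directly from the hypothesis that one ideal (namely ${\rm Ann}(Z(R))$) annihilates everything and no two non-central vertices are adjacent, since any such adjacency would close a triangle or $4$-cycle. In every case the girth is $\infty$ and the graph is nonempty, closing the cycle.

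The main obstacle I expect is the diameter-$3$ subcase of $(1)\Rightarrow(3)$: ruling out branching double stars. In the reduced setting (Theorem \ref{infty}) the existence of a $4$-vertex path could be pushed to a contradiction by contracting products, but for non-reduced $R$ one must show that the \emph{only} acyclic diameter-$3$ graph that can arise is $P_4$ and not a double star with extra leaves. The delicate point is controlling the products $I_iI_j$ of ideals along the path and verifying that an extra pendant vertex at either center would create a square; this requires carefully tracking which ideal products can equal existing vertices (forcing squares to vanish and producing the nilpotents that the non-reduced hypothesis supplies) and which must be new vertices that close a short cycle. Once the path is pinned to $P_4$, the translation to the product decomposition in $(2)(b)$ and the invocation of the coefficient-ring classification are comparatively routine.
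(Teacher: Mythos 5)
Your overall architecture ((1)$\Rightarrow$(3)$\Rightarrow$(2)$\Rightarrow$(1), classifying the graph first as a tree of diameter at most $3$) differs from the paper's ((1)$\Rightarrow$(2)$\Rightarrow$(3)$\Rightarrow$(1), which works ring-theoretically from the start) and is not unreasonable, but the proposal leaves the two genuinely hard steps unproved, and in both cases the missing ingredient is the same: you never bring a \emph{minimal} nilpotent ideal into play. In (1)$\Rightarrow$(3) you yourself flag that the diameter-$3$ case must be pinned down to $P_4$ rather than a larger double star, and then say only that this ``requires carefully tracking which ideal products can equal existing vertices.'' That is exactly the part that needs an argument, and the reduced-ring trick of Theorem \ref{infty} (closing a cycle through $I_1I_4$) does not extend, as you concede. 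The paper's mechanism is: since $R$ is non-reduced there is an ideal $I$ with $I^2=(0)$, and acyclicity forces $|\Bbb{I}(I)|\leq 3$, so $I$ may be taken minimal; then either $I$ is the unique minimal ideal, in which case one proves $IK=(0)$ for \emph{every} vertex $K$ (so $I\cdot Z(R)=(0)$ and the graph collapses to $K_1$ or a star, giving (2)(a)/(c)), or there is a second minimal ideal $J$, which must satisfy $J^2=J$ (else $I$--$J$--$(I+J)$--$I$ is a triangle), whence Brauer's Lemma yields an idempotent and the decomposition $R\cong R_1\times R_2$ of (2)(b). Without this dichotomy you have no tool to exclude, say, a double star with two leaves at one center.

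The same omission undermines your (3)$\Rightarrow$(2) in the $P_4$ case: you assert one can ``read off that $R$ decomposes,'' citing \cite[Corollary 2.3]{be-ra1}, but that result concerns reduced rings and star graphs, and obtaining $R\cong R_1\times R_2$ requires exhibiting a nontrivial idempotent. The only source of one here is again an idempotent minimal ideal via Brauer's Lemma (or lifting an idempotent modulo $nil(R)$, as the paper does in the girth-$4$ theorem), and nothing in your outline produces it. So both hard implications in your cycle reduce to the same unexecuted step. The remaining parts ((2)$\Rightarrow$(1), and the $K_1$ and $K_{1,n}$ cases of (3)$\Rightarrow$(2)) are fine in outline, though for $K_{1,n}$ you should also justify that $Z(R)$ is an ideal and an annihilating one (it equals ${\rm Ann}(C)$ for the center $C$ once you know $C$ annihilates every vertex), which you currently assert without proof.
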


\begin{pproof}
(1) $\Rightarrow$ (2) Since $R$ is a non-reduced ring, there exists an ideal $I$ such that $I^2=(0)$. If $|\Bbb{I}(I)|\geq 4$, then there exist distinct ideals $I_1,I_2,I_3\in \Bbb{I}(I)$, such that $I_1-I_2-I_3-I_1$ is a cycle and so ${\rm gr}(\Bbb{AG}(R))=3$, yielding a contradiction. Thus without loss of generality we may assume that $I$ is a minimal ideal. We have the following cases:

$Case $ 1: There exists a minimal ideal $J$ such that $I\neq J$. Then either $J^2 = J$ or $J^2 = 0$. If $J^2=(0)$, then $I-J-(I+J)-I$ is a cycle, yielding a contradiction. So we may assume that $J^2 = J$. Thus by Brauer's
Lemma (see\cite[10.22]{lam}), $J = Re$ for some idempotent element $e\in R$, so $R= Re\oplus R(1-e)$. Therefore, $R\cong R_1\times R_2$.
Suppose that $|\Bbb{I}(R_1)|\geq 3$ and $|\Bbb{I}(R_2)|\geq 3$. Let $I_1$ be a nonzero proper ideal of $R_1$ and $I_2$ be a nonzero proper ideal of $R_2$. Then $(I_1,0)-(0,R_2)-(R_1,0)-(0,I_2)-(I_1,0)$ is a cycle, yielding a contradiction. So we may assume that either $|\Bbb{I}(R_1)|= 2$ or $\Bbb{I}(R_2)= 2$. Without loss of generality we assume that $|\Bbb{I}(R_1)|=2$ and so $R_1$ is a field. Since $R_1$ is a field and $R$ is a non-reduced ring, we conclude that $R_2$ is a non-reduced ring. Let $I_2$ and $J_2$ be nonzero ideals of $R_2$ such that $I_2J_2=(0)$. If $I_2\neq J_2$, then $(0,I_2)-(R_1,0)-(0,J_2)-(0,I_2)$ is a cycle, contrary to ${\rm gr}(\Bbb{AG}(R))=\infty$. Thus $|\Bbb{A}(R_2)|=3$. By \cite[Remark 10]{aal} either $R_2\cong \frac{\Bbb{K}[x]}{(x^2)}$, where $\Bbb{K}$ is a field or $R_2\cong L$, where $L$ is a coefficient ring of characteristic $p^2$.

$Case $ 2: $I$ is the unique minimal ideal of $R$. Suppose that there exists $K\in \Bbb{A}(R)^{*}$ such that $IK\neq (0)$. Since $K\in \Bbb{A}(R)^{*}$, there exists $J\in \Bbb{A}(R)^{*}$ such that $KJ=(0)$. If $IJ\neq (0)$, then since $I$ is minimal ideal, $IJ=I$. Hence $IK=(IJ)K=I(JK)=(0)$, yielding a contradiction. Therefore, $IJ=(0)$. Since $JK=(0)$ and $IK\neq (0)$, $I\nsubseteq J$. Since $I$ is the unique minimal ideal of $R$ and $I\nsubseteq J$, there exists $J_1\subseteq J$ such that $J_1\neq I$. Hence $I-J_1-K-J_2-I$ is a cycle and so ${\rm gr}(\Bbb{AG}(R))=3$, contrary to ${\rm gr}(\Bbb{AG}(R))=\infty$. Therefore, we must have $IK=(0)$ for every $K\in \Bbb{A}(R)^{*}$. Thus $IZ(R)=(0)$. Now, we have two subcases:

$Subcase $ 2-1: ${\rm Ann}(Z(R))\neq I$. If $|\Bbb{I}(Z(R))|\geq 4$, then there exists $S\in \Bbb{A}(R)^{*}$ such that $I\neq S\neq {\rm Ann}(Z(R))$ and so $I-{\rm Ann}(Z(R))-S-I$ is a cycle, yielding a contradiction. So we may assume that $|\Bbb{A}(R)|=3$. Thus $\Bbb{A}(R)^{*}=\{I, {\rm Ann}(Z(R))\}$.

$Subcase $ 2-2: $I={\rm Ann}(Z(R))$. If $|\Bbb{A}(R)^{*}|=1$, then by \cite[Remark 10]{aal} either $R\cong \frac{\Bbb{K}[x]}{(x^2)}$, where $\Bbb{K}$ is a field or $R\cong L$, where $L$ is a coefficient ring of characteristic $p^2$. So we may assume that $|\Bbb{A}(R)^{*}|\geq 2$. Let $S,J\in \Bbb{A}(R)^{*}$ such that $SJ=(0)$ and $S\neq J$. If $S\neq I$ and $J\neq I$, then $I-S-J-I$ is a cycle, yielding a contradiction. Therefore, $S=I={\rm Ann}(Z(R))$ or $J=I={\rm Ann}(Z(R))$.

(2) $\Rightarrow$ (3)  If either $R\cong \frac{\Bbb{K}[x]}{(x^2)}$, where $\Bbb{K}$ is a field or $R\cong L$, where $L$ is a coefficient ring of 
characteristic $p^2$, then $\Bbb{AG}(R)\cong K_1$. 

If $R\cong R_1\times R_2$ such that $R_1$ is a field and either $R_2\cong \frac{\Bbb{K}[x]}{(x^2)}$, where $\Bbb{K}$ is a  field or $R_2\cong L$, where $L$ is a coefficient ring of characteristic $p^2$, then $R_2$ has a non-trivial ideal say $I$, and $\Bbb{AG}(R)\cong (R_1,I)-(0,I)-(R_1,0)-(0,R_2)\cong P_4$.

 Let $Z(R)$ is an annihilating ideal and if $IJ=(0)$ ($I\neq J$), then $I={\rm Ann}(Z(R))$ or $J={\rm Ann}(Z(R))$. Then every annihilating ideal is only adjacent to $I$ and so either $\Bbb{AG}(R)\cong K_{1}$ or $\Bbb{AG}(R)\cong K_{1,n}$ for some $n\geq 1$.

(3) $\Rightarrow$ (1) Clear.\hfill$\square$
\end{pproof}

\begin{ttheo}
The following statements are equivalent for a non-reduced ring $R$.

{\rm (1)} $\Bbb{AG}(R)$ is nonempty with ${\rm gr}(\Bbb{AG}(R))=4$.

{\rm (2)} $R\cong R_1\times R_2$, where either $R_1\cong \frac{\Bbb{K}[x]}{(x^2)}$, where $\Bbb{K}$ is a field or $R_1\cong L$, where $L$ is a coefficient ring of characteristic $p^2$ and $R_2$ is an integral domain which is not a field.

{\rm (3)} $\Bbb{AG}(R)$ is isomorphic to Figure 1.
\end{ttheo}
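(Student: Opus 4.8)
The plan is to prove the cycle of implications $(1)\Rightarrow(2)\Rightarrow(3)\Rightarrow(1)$, with essentially all of the content in $(1)\Rightarrow(2)$. For $(2)\Rightarrow(3)$ I would just read $\Bbb{AG}(R)$ off the product $R\cong R_1\times R_2$: here $R_1$ has a single nontrivial ideal $I_1$ with $I_1^{2}=(0)$, $R_2$ is a domain, every ideal of $R$ has the form $A\times B$, and $(A\times B)(C\times D)=AC\times BD$ with $BD=(0)$ in $R_2$ if and only if $B=(0)$ or $D=(0)$. A short case-check shows the vertices are exactly $I_1\times(0)$, $R_1\times(0)$, and the two families $(0)\times J$ and $I_1\times J$ ranging over the nonzero ideals $J$ of $R_2$; that $I_1\times(0)$ is adjacent to all vertices except $R_1\times(0)$, that $R_1\times(0)$ is adjacent exactly to the ideals $(0)\times J$, and that the two families are internally nonadjacent. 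This is the bipartite graph of Figure~1. For $(3)\Rightarrow(1)$ it suffices that Figure~1 is nonempty, that (as $R_2$ is not a field it has distinct nonzero ideals $J_1\neq J_2$, so) it contains the $4$-cycle $(I_1\times(0))-((0)\times J_1)-(R_1\times(0))-((0)\times J_2)-(I_1\times(0))$, and that, being bipartite, it is triangle-free, whence ${\rm gr}(\Bbb{AG}(R))=4$.

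For $(1)\Rightarrow(2)$ I would first reduce to a single square-zero ideal. Since $R$ is non-reduced there is a nonzero ideal of square $(0)$, and as ${\rm gr}(\Bbb{AG}(R))=4$ forbids triangles, the reduction used for the analogous non-reduced result lets me take a \emph{minimal} ideal $I$ with $I^{2}=(0)$ (three distinct ideals inside a square-zero ideal would be pairwise adjacent). I would then show $I$ is the unique minimal ideal: a second minimal ideal $J$ with $J^{2}=(0)$ gives the triangle $I-J-(I+J)-I$, while $J^{2}=J$ yields, by Brauer's Lemma, a splitting $R\cong R_1\times R_2$ with $R_1$ a field, after which $R_1\times(0)$ is adjacent to every $(0)\times B$ and any edge of $\Bbb{AG}(R_2)$ forces a triangle; connectedness of $\Bbb{AG}(R_2)$ then forces $\Bbb{AG}(R_2)\cong K_1$ and hence ${\rm gr}(\Bbb{AG}(R))=\infty$ by the non-reduced $\infty$-girth theorem, a contradiction. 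Finally, if $IK=(0)$ for every $K\in\Bbb{A}(R)^{*}$ then $I$ is adjacent to all other vertices and triangle-freeness makes $\Bbb{AG}(R)$ a star of girth $\infty$; so some $K$ has $IK\neq(0)$, and minimality of $I$ gives $IK=I$.

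The heart of the argument is to upgrade this to a genuine direct product. Writing $\mathfrak m={\rm Ann}(I)$, which is maximal because $I$ is a minimal (hence simple) ideal, the relation $IK=I$ shows $K\nsubseteq\mathfrak m$, so $Z(R)\nsubseteq\mathfrak m$ and $Z(R)$ meets a second maximal prime. By a prime-avoidance and no-triangle argument in the spirit of Lemma~\ref{new} (using triangle-freeness in place of the diameter hypothesis) one should force $Z(R)=\mathfrak m\cup\mathfrak p$ for exactly two primes; these are comaximal since $\mathfrak m$ is maximal with $\mathfrak p\nsubseteq\mathfrak m$, and by an argument parallel to Lemma~\ref{1.10} (whose conclusion $P_1\cap P_2=(0)$ now weakens to $(\mathfrak m\cap\mathfrak p)^{2}=(0)$) one gets that $N:=\mathfrak m\cap\mathfrak p$ is square-zero. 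Then the Chinese Remainder Theorem gives $R/N\cong R/\mathfrak m\times R/\mathfrak p$, a nontrivial product, and since $N$ is nil its idempotent lifts to a nontrivial idempotent of $R$, so $R\cong R_1\times R_2$. I expect establishing this two-comaximal-primes structure with nilpotent intersection — i.e.\ producing the idempotent with no Noetherian hypothesis — to be the main obstacle.

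Once $R\cong R_1\times R_2$ is in hand the factors are pinned down by triangle-freeness. An edge $C_1C_2=(0)$ (with $C_1\neq C_2$) in $\Bbb{AG}(R_1)$ would give the triangle $(C_1\times(0))-(C_2\times(0))-((0)\times R_2)-(C_1\times(0))$, and symmetrically for $R_2$, so each $\Bbb{AG}(R_i)$ is edgeless and, being connected, is empty or $\cong K_1$; that is, each $R_i$ is a domain or a ring with $\Bbb{AG}\cong K_1$. Both factors being domains would make $R$ reduced, and both being of the second type, with square-zero ideals $I_1,I_2$, would give the triangle $(I_1\times(0))-((0)\times I_2)-(I_1\times I_2)-(I_1\times(0))$. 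Hence, after relabeling, exactly one factor, say $R_1$, has $\Bbb{AG}(R_1)\cong K_1$, so $R_1\cong\Bbb{K}[x]/(x^{2})$ or a coefficient ring of characteristic $p^{2}$ by \cite[Remark 10]{aal}, while $R_2$ is a domain. Finally $R_2$ cannot be a field, for otherwise $R$ would be of the $P_4$-type classified in the non-reduced $\infty$-girth theorem; thus $R_2$ is an integral domain that is not a field, which is exactly statement $(2)$.
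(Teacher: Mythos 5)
Your reductions at the start of $(1)\Rightarrow(2)$ (a minimal square-zero ideal $I$, uniqueness of the minimal ideal, existence of $K$ with $IK=I$), your endgame once a direct product is in hand, and your proofs of $(2)\Rightarrow(3)\Rightarrow(1)$ are all sound and close to the paper's. But the step you yourself flag as "the main obstacle" --- producing a nontrivial idempotent of $R$ --- is exactly where the proof lives, and your proposed route to it does not go through. You want to show $Z(R)={\mathfrak m}\cup{\mathfrak p}$ for exactly two primes with $({\mathfrak m}\cap{\mathfrak p})^2=(0)$ "in the spirit of" Lemma \ref{new} and Lemma \ref{1.10}; but those arguments lean on the Noetherian hypothesis (finitely many maximal primes in $Z(R)$ via Kaplansky's Theorem 80, and prime avoidance) and on ${\rm diam}(\Bbb{AG}(R))=2$ to manufacture common annihilators, neither of which is available here: the present theorem has no chain condition, $Z(R)$ may a priori be an infinite union of maximal primes, and girth $4$ does not by itself hand you the "either $(Rp_1)(Rp_2)=(0)$ or there is a nonzero $I\subseteq{\rm Ann}(p_1)\cap{\rm Ann}(p_2)$" dichotomy that drives Lemma \ref{1.10}. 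As written, $(1)\Rightarrow(2)$ is therefore incomplete.

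The paper's proof fills this gap by extracting the idempotent \emph{locally from a $4$-cycle} rather than from the global structure of $Z(R)$. It first massages an arbitrary $4$-cycle into one of the form $I_1-I_2-I_3-I-I_1$ passing through the minimal ideal $I$, then uses triangle-freeness to force $I\subseteq I_2$ and ${\rm Ann}(I)\cap I_2=I$. Since $I=Rz$ is minimal, ${\rm Ann}(I)={\rm Ann}(z)$ is maximal, and $II_2\neq(0)$ gives $I_2\nsubseteq{\rm Ann}(I)$, hence ${\rm Ann}(I)+I_2=R$; writing $1=x+y$ with $x\in{\rm Ann}(I)$, $y\in I_2$, one gets $xy\in Rx\cap Ry\subseteq{\rm Ann}(I)\cap I_2=I\subseteq nil(R)$, so $x+nil(R)$ is a nontrivial idempotent of $R/nil(R)$ (nontrivial because $x\in nil(R)$ would put $1=(x+y)^n$ in the proper ideal $I_2$, and symmetrically for $y$), which lifts to a nontrivial idempotent of $R$. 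If you want to complete your write-up, replace your two-primes analysis of $Z(R)$ with this cycle-based construction; everything before and after it in your proposal can stay essentially as is.
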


\begin{pproof}
(1) $\Rightarrow$ (2) Since $R$ is a non-reduced ring, there exists an ideal $I$ such that $I^2=(0)$. If $|\Bbb{I}(I)|\geq 4$, then there exist distinct ideals $I_1,I_2,I_3\in \Bbb{I}(I)^{*}$, such that $I_1-I_2-I_3-I_1=(0)$ is a cycle and so ${\rm gr}(\Bbb{AG}(R))=3$, yielding a contradiction. Without loss of generality we may assume that $I$ is a minimal ideal.
We first show that there exist distinct ideals $I_1,I_2,I_3\in \Bbb{A}(R)^{*}$ such that $I_1-I_2-I_3-I-I_1$ is a cycle in $\Bbb{AG}(R)$. Since ${\rm gr}(\Bbb{AG}(R))=4$, there exist distinct ideals $I_1,I_2,I_3,I_4\in \Bbb{A}(R)^{*}$ such that $I_1-I_2-I_3-I_4-I_1$. Assume that $I=I_i$ for some $i$. Without loss of generality assume that $i=4$. Then $I_1-I_2-I_3-I-I_1$ is a cycle in $\Bbb{AG}(R)$. So we may assume that $I_i\neq I$ for all $1\leq i\leq 4$. If $I \nsubseteq I_i$ for all $1\leq i\leq4$, then $II_i=(0)$. Hence $I-I_1-I_2-I$ is a cycle, yielding a contradiction. Therefore, there exists $i$ such that $I\subseteq I_i$. Without loss of generality assume that $i=4$. Thus $I_1-I_2-I_3-I-I_1$ is a cycle in $\Bbb{AG}(R)$.
If $II_2=(0)$, then $I-I_1-I_2-I$ is a cycle in $\Bbb{AG}(R)$, yielding a contradiction. Thus $II_2\neq (0)$ and since $I$ is a minimal ideal, $I\subseteq I_2$. Suppose that ${\rm Ann}(I)\cap I_2\neq I$. If $({\rm Ann}(I)\cap I_2)\neq I_3$, then $I-({\rm Ann}(I)\cap I_2)-I_3-I$ is a cycle in $\Bbb{AG}(R)$, yielding a contradiction. If ${\rm Ann}(I)\cap I_2= I_3$, then $I-({\rm Ann}(I)\cap I_2)-I_1-I$ is a cycle in $\Bbb{AG}(R)$, yielding a contradiction. Thus we can assume that ${\rm Ann}(I)\cap I_2= I$. Let $0\neq z\in I$. Then $Rz=I$. Since $Rz\cong R/{\rm Ann}(z)$ and $Rz$ is a minimal ideal of $R$, we conclude that ${\rm Ann}(z)={\rm Ann}(I)$ is a maximal ideal. Since $II_2\neq (0)$, $I_2\nsubseteq {\rm Ann}(I)$. Therefore, ${\rm Ann}(I)+I_2=R$. Thus there exist $x\in {\rm Ann}(z)$ and $y\in I_2$ such that $x+y=1$. Since ${\rm Ann}(I)\cap I_2= I$, $(Rx)\cap (Ry)\subseteq I\subseteq nil(R)$. If $x\in nil(R)$, then there exists a positive integer $n$ such that $x^n=0$. Therefore, $(x+y)^n\in (Ry)$, contrary to $x+y=1$. Thus $x\not \in nil(R)$. Similarly $y\not \in nil(R)$. Note that $xy\in (Rx)\cap (Ry)\subseteq nil(R)$, we obtain that $x^{2}+nil(R)=(x^{2}+xy)+nil(R)= x(x+y)+nil(R)=x+nil(R)$. Thus $x+nil(R)$ is a nontrivial idempotent in $R/nil(R)$ and hence by \cite[Corollary, p.73]{lamb} $R$ has a nontrivial idempotent.
Since $R$ has a nontrivial idempotent, $R\cong R_1\times R_2$.
Note that $R$ is a non-reduced ring, so either $R_1$ or $R_2$ is a non-reduced ring. Without loss of generality assume that $R_1$ is a non-reduced ring. 
Suppose that $I_1$ and $I_2$ are ideals of $R_1$ such that $I_1I_2=(0)$. If $I_1\neq I_2$, then $(0,R_2)-(I
_1,0)-(I_2,0)-
(0,R_2)$
is a cycle
in $\Bbb{AG}(R)$, yielding a contradiction. Thus $I_1=I_2$. We conclude that $|\Bbb{A}(R_1)^*|=1$. Thus by \cite[Remark 10]{aal}, either $R_1\cong \frac{\Bbb{K}[x]}{(x^2)}$, where $\Bbb{K}$ is a field or $R_1\cong L$, where $L$ is a coefficient ring of characteristic $p^2$. We have the following cases:

$Case$ 1: $R_2$ is an integral domain. If $R_2$ is a field then it is easy to see that $\Bbb{AG}(R)$ is a star graph, yielding a contradiction since ${\rm gr}(\Bbb{AG}(R))=4$. Therefore, $R_2$ is an integral domain which is not a field.

$Case$ 2: $R_2$ is not an integral domain. Then there exist $I_2, J_2\in \Bbb{A}(R_2)^{*}$ such that $I_2J_2=(0)$. Since $|\Bbb{A}(R_1)^*|=1$, there exists $I_1\in \Bbb{A}(R_1)^{*}$ such that ${(I_1)}^{2}=(0)$. Thus $(I_1,0)-(I_1,J_2)-(0,I_2)-(I_1,0)$ is a cycle in $\Bbb{AG}(R)$, yielding a contradiction. Therefore, this case is impossible.

(2) $\Rightarrow$ (3)  Let $I$ be the only nontrivial ideal of $R_1$. Then $\Bbb{AG}(R)$ is isomorphic to Figure 1.

 $(3) \Rightarrow (1)$ Clear. \hfill$\square$
\end{pproof}

\begin{center}
\begin{tikzpicture}[scale=0.7]
\tikzstyle{every node}=[draw, shape=circle, inner sep=2pt];
\node (v1) at (1,0)[draw, circle, fill][label=above right:${(I,0)}$]{};
\node (v2) at (-1,0)[draw, circle, fill][label=above left:${(R_1,0)}$]{};
\node (v3) at (2,-6)[draw, circle, fill][label=right:${}$]{};
\node (v4) at (1,-6)[draw, circle, fill][label=right:${}$]{};
\node (v5) at (-1,-6)[draw, circle, fill][label=right:${}$]{};
\node (v6) at (-2,-6)[draw, circle, fill][label=right:${}$]{};
\node (v7) at (-3,-6)[draw, circle, fill][label=above right:${}$]{};
\node (v8) at (-4,-6)[draw, circle, fill][label=left:${\ldots~~~~}$]{};
\node (v9) at (4,-6)[draw, circle, fill][label=right:${~~~~\ldots}$]{};
\node (v10) at (3,-6)[draw, circle, fill][label=above right:${}$]{};
\node (v11) at (0,3)[draw, circle, fill][label=above:${(I,R_2)}$]{};

\draw (v11) --(v1) -- (v3) --(v2)--(v4) -- (v1)-- (v5)-- (v2)--(v6)--(v1)--(v7)--(v2)(v1)--(v8) --(v2)--(v9)--(v1)--(v10)-- (v2);
\end{tikzpicture}
\end{center}

\begin{center}
Figure 1
\end{center}

\section {A Relation Between the Smarandache Vertices, Girth, and Diameter of the Annihilating-ideal Graphs}

The concept of a {\it Smarandache vertex} in a (simple) graph was first
introduced by Rahimi \cite{rahim} in order to study the
{\it Smarandache zero-divisors} of a commutative ring which was
introduced by Vasantha Kandasamy in \cite{vas} for semigroups and
rings (not necessarily commutative). A non-zero element $a$ in a
commutative ring $R$ is said to be a Smarandache zero-divisor if
there exist three different nonzero elements $x$, $y$, and $b$
($\neq a$) in $R$ such that $ax = ab = by = 0$, but $xy \neq 0$.
This definition of a Smarandache zero-divisor (which was given in
\cite{rahim}) is slightly different from the definition of
Vasantha Kandasamy in \cite{vas}, where in her definition $b$
could also be equal to $a$.
In this section, we provide some examples and facts about the Smarandache vertices (or {\it S-vertices} for short) of $\Bbb {AG}(R)$. First, we define the notion of a Smarandache vertex in a simple graph and provide several (in particular, graph-theoretic) examples (see Lemmas \ref{triv1},
\ref{triv2}, and Proposition \ref{color}). Also we provide some
more ring-theoretic examples as well.

\begin{defi} A vertex $a$ in a simple graph $G$ is said to be a Smarandache vertex (or S-vertex for short) provided that there exist three distinct vertices $x$, $y$, and $b$ ($\neq a$) in $G$ such that $a$ ---$x$, $a$---$b$, and $b$---$y$ are edges in $G$; but there is no edge between $x$ and $y$. \end{defi}

Note that a graph containing a Smarandache vertex should have at
least four vertices and three edges, and also the degree of each
S-vertex must be at least 2. The proofs of the next two lemmas (Lemma \ref{triv1} and Lemma \ref{triv2}) are
not difficult and can be followed directly from the definition and
we leave them to the reader.
Recall that for a graph $G$, a complete subgraph of G is called a
{\it clique}. The {\it clique number}, $\omega(G)$, is the
greatest integer $n \geq 1$ such that $K_n \subseteq G$, and
$\omega(G)$ is infinite if $K_n \subseteq G$ for all $n \geq 1$.
The {\it chromatic number} $\chi(G)$ of a graph $G$ is defined to
be the minimum number of colors required to color the vertices of
$G$ in such a way that no two adjacent vertices have the same
color. A graph is called {\it weakly perfect} if its chromatic
number equals its clique number.

\begin{llem} \label{triv1} The following statements are true for the given graphs:
\begin{enumerate}
\item[$\mathrm{(1)}$] A complete graph does not have any S-vertices.

\item[$\mathrm{(2)}$] A star graph does not have any S-vertices.

\item[$\mathrm{(3)}$] A complete bipartite graph has no S-vertices.

\item[$\mathrm{(4)}$] Let $G$ be a complete $r$-partite graph ($r \geq 3$) with parts $V_1, V_2, \ldots, V_r$. If at least one part, say $V_1$, has at least two elements, then every element not in $V_1$ is an S-vertex. Further, if there exist at least two parts of $G$ such that each of which has at least two elements, then every element of $G$ is an S-vertex.

\item[$\mathrm{(5)}$] A bistar graph has two Smarandache vertices; namely, the center of each star. A {\it bistar} graph is a graph generated by two star graphs when their centers are joined.

\item[$\mathrm{(6)}$] Every vertex in a cycle of size greater than or equal to five in a graph is an S-vertex provided that there is no edge between the nonneighbouring vertices. In particular, every vertex in a cyclic graph $C_n$ of size larger than or equal to 5 is a Smarandache
vertex. Note that for odd integers $n \geq 5$, $\chi(C_n) = 3$ and $\omega(C_n) = 2$; and for even integers $n \geq 5$, $\chi(C_n) = \omega(C_n) = 2$.

\item[$\mathrm{(7)}$] Let $G$ be a graph containing two distinct vertices $x$ and $y$ such that $d(x,y) = 3$. Then $G$ has an S-vertex. But the converse is not true in general. Suppose $G$ is the graph $x$---$a$, $a$---$b$, $b$---$y$, and $a$---$y$; where obviously, $a$ is an S-vertex and $d(x,y) = 2$. Note that if diameter of $G$ is 3, then it has an S-vertex since there exist two distinct vertices $x$ and $y$ in $G$ such that $d(x,y) = 3$.
\end{enumerate}
\end{llem}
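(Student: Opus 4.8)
The unifying observation I would start from is that, by definition, a vertex $a$ is an S-vertex precisely when one can exhibit three distinct vertices $x,y,b$, all different from $a$, with edges $ax$, $ab$, $by$ present but the edge $xy$ absent. In particular $\deg(a)\ge 2$ is forced, and each of the seven claims reduces to one of two routine tasks: for the non-existence statements, showing that a suitable non-adjacent pair $\{x,y\}$ can never arise; for the existence statements, writing down explicit witnesses $x,b,y$.

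For statements (1)--(3) the plan is direct. In a complete graph every two distinct vertices are adjacent, so the edge-free pair $\{x,y\}$ simply cannot exist. In a star graph the leaves have degree $1$ and are excluded immediately, while for the centre $a$ every neighbour $b$ is a leaf whose sole neighbour is $a$, so the edge $by$ with $y\ne a$ cannot be drawn. In a complete bipartite graph with parts $V_1,V_2$ I would note that if $a\in V_1$ then its neighbours $x,b$ lie in $V_2$, the neighbour $y$ of $b$ lies back in $V_1$, and then $x\in V_2$, $y\in V_1$ are automatically adjacent, contradicting the S-vertex condition.

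For (4)--(6) I would produce explicit witnesses. In the complete $r$-partite case with $|V_1|\ge2$ and $a\notin V_1$, say $a\in V_j$, I would pick distinct $x,y\in V_1$ and a vertex $b$ in a part $V_k$ distinct from $V_1$ and $V_j$ --- such a part exists exactly because $r\ge3$; then $ax,ab,by$ are edges while $x,y\in V_1$ are non-adjacent, and the four vertices are distinct since they lie in the three distinct parts $V_1,V_j,V_k$. The ``further'' clause follows by applying this to each of the two large parts in turn, since every vertex lies outside at least one of them. For a bistar with centres $c_1,c_2$, the path (a leaf of $c_1$)---$c_1$---$c_2$---(a leaf of $c_2$) shows each centre is an S-vertex, and since every leaf has degree $1$ these are the only two. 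For a chordless cycle of length $n\ge5$ I would take $a=v_i$, $x=v_{i-1}$, $b=v_{i+1}$, $y=v_{i+2}$; all three edges are cycle-edges and the four vertices are distinct, so the only thing to verify is that $v_{i-1}$ and $v_{i+2}$ are non-adjacent, which fails for $n\le4$ and holds for $n\ge5$. The quoted values $\chi(C_n),\omega(C_n)$ are the standard facts that an odd cycle needs three colours, an even cycle is bipartite, and no cycle of length $\ge4$ contains a triangle.

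For (7), given $d(x,y)=3$ I would take a shortest $x$--$y$ path $x$---$a$---$b$---$y$: its four vertices are distinct and $x,y$ are non-adjacent because $d(x,y)>1$, so $a$ is an S-vertex; the failure of the converse is witnessed by the explicit four-vertex graph already written in the statement. The only genuinely load-bearing hypotheses in the whole lemma are $r\ge3$ in (4) and $n\ge5$ in (6): the former guarantees a third part to house $b$, the latter guarantees the non-adjacency of $x=v_{i-1}$ and $y=v_{i+2}$, and I expect pinning down these two thresholds to be the main (and essentially only) obstacle; everything else is bookkeeping of incidences and distinctness.
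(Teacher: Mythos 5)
Your proof is correct; the paper itself gives no argument for this lemma, stating only that the proofs ``are not difficult and can be followed directly from the definition'' and leaving them to the reader, and your case-by-case verification (non-existence of a non-adjacent pair $\{x,y\}$ for (1)--(3), explicit witnesses for (4)--(7), with the thresholds $r\ge 3$ and $n\ge 5$ correctly identified as the load-bearing hypotheses) is exactly the intended direct check. Nothing is missing, and your treatment matches the paper's (omitted) approach.
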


\begin{eexam} In \cite[Corollary 2.3]{be-ra1}, it is shown that for any reduced
ring $R$, $\Bbb {AG}(R)$ is a star graph if and only if $R \cong F
\times D$, where $F$ is a field and $D$ is an integral domain. In
this case, $\Bbb {AG}(R)$ has no Smarandache vertices.
\end{eexam}

\begin{eexam} In \cite[Lemma 1.8]{be-ra2} it is shown that for any reduced ring $R$ with finitely many minimal primes, ${\rm diam}(\Bbb{AG}(R)) = 3$ provided $R$ has more than two minimal
primes. Thus by Lemma \ref{triv1}(7), $\Bbb {AG}(R)$ has an
S-vertex. This also could be an example of a weakly perfect graph containing an S-vertex since by \cite[Corollary 2.11]{be-ra2}, $\Bbb {AG}(R)$ is weakly perfect for any reduced ring $R$ (see also Proposition \ref{color}, Remark \ref{weak1}, and Example \ref{weak2}). \end{eexam}

\begin{llem} \label{triv2} Let $C$ be a clique in a graph $G$ such that $|C| \geq 3$. Suppose that $x$ is a vertex in $G\setminus C$ and $x$ makes a link with at least one vertex or at most $|C|-2$ vertices of $C$, then every vertex of $C$ is an S-vertex. In other case, if $x$ makes links with $|C|-1$ vertices of $C$, then
all those $|C|-1$ vertices are S-vertices.
\end{llem}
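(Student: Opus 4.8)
The plan is to reduce everything to the definition of an S-vertex: a vertex $a$ is an S-vertex precisely when there exist three vertices, say $u,v,c$, all distinct and different from $a$, with $a$---$u$, $a$---$c$, $c$---$v$ edges but $u$---$v$ a non-edge. I would first fix notation by writing the external vertex of the statement as $w$ (to avoid clashing with the letter $x$ used in the definition) and by splitting the clique into $S=\{c\in C: w\text{---}c \text{ is an edge}\}$ and its complement $T=C\setminus S$. In these terms the first hypothesis reads $1\le |S|\le |C|-2$ (equivalently $|T|\ge 2$), while the second reads $|S|=|C|-1$ (so $T=\{t\}$ is a singleton). The key structural observation I would exploit throughout is that, since $C$ is a clique, every edge I need among vertices of $C$ is automatically present; hence the only non-edge I ever have to manufacture comes from $w$ together with a vertex of $T$.

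For Case 1 I would treat the two types of vertices separately. If $a\in S$, I would take $u=w$ (the edge $a$---$w$ exists because $a\in S$), choose $v\in T$ arbitrarily (the non-edge $w$---$v$ holds since $v\in T$, and $v\neq a$ because $S\cap T=\varnothing$), and pick $c\in C\setminus\{a,v\}$, which exists since $|C|\ge 3$; the edges $a$---$c$ and $c$---$v$ are then clique edges. If instead $a\in T$, I would swap the role of $w$: set $v=w$, choose $u\in T\setminus\{a\}$ (possible exactly because $|T|\ge 2$, giving the non-edge $u$---$w$), and take $c\in S$ (possible because $|S|\ge 1$, giving the edge $c$---$w$); again $a$---$u$ and $a$---$c$ are clique edges. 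In each branch I would finish by checking that $a,u,v,c$ are four distinct vertices, which follows from the disjointness of $S$ and $T$ and from $w\notin C$. This shows every vertex of $C$ is an S-vertex.

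For Case 2, $T=\{t\}$, and I would show that every vertex $a\in S$ is an S-vertex by the same device as the first branch above: take $u=w$, $v=t$ (non-edge $w$---$t$), and $c\in C\setminus\{a,t\}$, which is nonempty since $|C|\ge 3$ and $a\neq t$. I would note that the statement deliberately claims only the $|C|-1$ vertices lying in $S$, since $t$ itself need not be an S-vertex when the only neighbours available are those in $C\cup\{w\}$.

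The routine but essential obstacle is the bookkeeping: one must never request an edge that does not exist and must guarantee that the four witness vertices are genuinely distinct. The clique hypothesis on $C$ disposes of the first concern entirely, so the real content is just verifying distinctness in each branch. The one place where a cardinality hypothesis is truly load-bearing is the sub-case $a\in T$ of Case 1, which forces $|T|\ge 2$ (equivalently $|S|\le |C|-2$) so that a second vertex of $T$ is available to serve as the non-neighbour $u$; this is exactly the dividing line between the two cases of the statement.
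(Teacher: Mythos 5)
Your proof is correct: the paper explicitly leaves this lemma to the reader as a direct consequence of the definition, and your case analysis (splitting $C$ into the neighbours $S$ and non-neighbours $T$ of the external vertex and checking the four-vertex configuration in each branch) is exactly the straightforward verification the authors had in mind. The distinctness bookkeeping and the observation that the hypothesis $|S|\le |C|-2$ is needed precisely to find a second non-neighbour in $T$ when $a\in T$ are both handled properly.
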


\begin{ppro} \label{color} Let $G$ be a connected graph whose clique number is strictly larger than 2. If $\omega(G) \neq \chi(G)$, then $G$ has an S-vertex. In other words, for any connected graph $G$ with $\omega(G) \geq 3$ and no S-vertices, then $\omega(G) = \chi(G)$ (i.e., $G$ is weakly perfect). \end{ppro}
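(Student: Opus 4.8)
Since $\omega(G)\le\chi(G)$ holds for every graph, it suffices to establish the reverse inequality under the hypotheses; I will in fact argue that the hypotheses force $G$ to be complete, whence $\chi(G)=\omega(G)=|V(G)|$ and weak perfectness is immediate. So the plan is to prove the ``in other words'' reformulation directly: if $G$ is connected, $\omega(G)\ge 3$, and $G$ has no S-vertex, then $G$ is a complete graph.

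First I would fix a maximum clique $C$ of $G$, so that $|C|=\omega(G)\ge 3$; in particular $|C|\ge 3$, which is exactly the size hypothesis needed to invoke Lemma \ref{triv2}. The heart of the argument is to understand, for an arbitrary vertex $z\in V(G)\setminus C$, how $z$ can attach to $C$, so I distinguish the possible numbers of neighbours that $z$ has inside $C$. If $z$ is adjacent to at least one but at most $|C|-2$ vertices of $C$, then by Lemma \ref{triv2} every vertex of $C$ is an S-vertex; and if $z$ is adjacent to exactly $|C|-1$ vertices of $C$, then by the second clause of Lemma \ref{triv2} those $|C|-1\ge 2$ vertices are S-vertices. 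Either way $G$ would contain an S-vertex, contradicting the hypothesis. Hence no vertex outside $C$ can be adjacent to a proper, nonempty subset of $C$: each $z\in V(G)\setminus C$ is adjacent either to all of $C$ or to none of $C$.

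Next I rule out all but the trivial possibility. A vertex $z$ adjacent to every vertex of $C$ would make $C\cup\{z\}$ a clique of size $|C|+1>\omega(G)$, contradicting the maximality of $C$; so this case cannot occur. Thus every vertex of $V(G)\setminus C$ is adjacent to no vertex of $C$, which means there are no edges between $C$ and $V(G)\setminus C$. Since $C$ is a clique it is internally connected, so it constitutes a full connected component of $G$; as $G$ is connected and $C\neq\emptyset$, this forces $V(G)=C$, i.e.\ $G$ is the complete graph on $C$. Consequently $\chi(G)=\omega(G)$, as claimed.

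The proof is essentially routine once Lemma \ref{triv2} is in hand: the real content---that a partial attachment of an outside vertex to a large clique always creates an S-vertex---is precisely what that lemma supplies, and the remaining maximality-plus-connectivity bookkeeping is elementary. The only point demanding care is to confirm that the two partial-adjacency ranges in Lemma \ref{triv2}, namely $1$ through $|C|-2$ neighbours and exactly $|C|-1$ neighbours, together exhaust every proper nonempty attachment, so that no intermediate case slips through; the hypothesis $\omega(G)\ge 3$ (hence $|C|\ge 3$) is exactly what guarantees these ranges are nonempty and that the lemma is applicable.
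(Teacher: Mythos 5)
Your proof is correct and follows essentially the same route as the paper: fix a maximum clique $C$ with $|C|=\omega(G)\ge 3$, use Lemma \ref{triv2} to rule out any vertex outside $C$ attached to a nonempty proper subset of $C$, and use maximality of $C$ plus connectedness to conclude. You merely state it contrapositively and spell out the connectivity step (why a partially attached outside vertex must exist when $G$ is not complete) that the paper leaves implicit.
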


\begin{pproof} Let $C$ be a (largest) clique in $G$ with $|C| \geq 3$. Since $\omega(G) \neq \chi(G)$, then $G$ is not a complete graph. Thus, there exists a vertex $x \in G \setminus C$ which makes edge(s) with at least one or at most $\omega(G)-1$ elements of $C$. Now the proof is immediate from Lemma \ref{triv2}. \end{pproof}

\begin{rrem} \label{weak1} In the next example we show that The converse of the above proposition need not be true in general. Also None of the graphs in Parts (1), (2), and (3) of Lemma \ref{triv1}, has an S-vertex where $\omega(G) = \chi(G)$. Note that each of the graphs in Parts (2) and (3) has $\omega(G) = \chi(G) = 2$. The graph in Part (5) has two S-vertices and $\omega(G) = \chi(G) = 2$. See also Part (6) of Lemma \ref{triv1}. \end{rrem}

\begin{eexam} \label{weak2} As in \cite[Proposition 2.1]{be-ra2}, let
\begin{center}
$R=\Bbb{Z}_4[X, Y, Z]/(X^2-2, Y^2-2, Z^2, 2X, 2Y, 2Z, XY, XZ,
YZ-2)$\end{center} be a ring and $C = \{(2), (x), (y), (y+z)\}$ a
clique in $\Bbb {AG}(R)$. Since $(z) \notin C$ and it does not
make a link with all the elements of $C$, then by Lemma
\ref{triv2}, $C$ contains an S-vertex. Hence by \cite[Proposition
2.1]{be-ra2}, this is an example of a weakly perfect graph
containing a Smarandache vertex with $\chi(\Bbb{AG}(R)) =
\omega(\Bbb{AG}(R)) = 4 \geq 3$. \end{eexam}

\begin{rrem} Conjecture 0.1 in \cite{be-ra2} states that $\Bbb {AG}(R)$ is weakly perfect for any ring $R$. Now from Proposition \ref{color}, this conjecture is true for any ring $R$ with $\omega(\Bbb{AG}(R)) \geq 3$ and $\Bbb{AG}(R)$ containing no S-vertices. Note that \cite[Corollary 2.11]{be-ra2} proves the validity of this conjecture for any reduced ring $R$. \end{rrem}

\begin{ppro} Let $\{I_1, I_2, \ldots, I_n\}$ be a clique in $\Bbb {AG}(R)$ with $n \geq 3$. Then
\begin{enumerate}
\item[$\mathrm{(1)}$] $\Bbb {AG}(R)$ contains $n$ S-vertices provided that $I_i^2 \neq (0)$ and $I_j^2 \neq (0)$ for some $1 \leq i \neq j \leq n$.

\item[$\mathrm{(2)}$] $\Bbb {AG}(R)$ contains $n$ S-vertices provided that $I_i^2 \neq (0)$ and $I_j \not \subseteq I_i$ for some $1 \leq i \neq j \leq n$.

\item[$\mathrm{(3)}$] $\Bbb {AG}(R)$ contains $n$ S-vertices provided that $I_j^2 \neq (0)$ and $I_i \not \subseteq I_j$ for some $1 \leq i \neq j \leq n$.

\item[$\mathrm{(4)}$] $\Bbb {AG}(R)$ contains $n$ S-vertices provided that $R$ is a reduced ring.
\end{enumerate}
\end{ppro}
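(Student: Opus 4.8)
The plan is to run everything through Lemma \ref{triv2}: to certify S-vertices inside the clique $C=\{I_1,\dots,I_n\}$ it suffices to exhibit one vertex $x\in\Bbb{AG}(R)\setminus C$ and count its neighbours in $C$ — if $x$ is adjacent to between $1$ and $n-2$ members then every $I_k$ is an S-vertex, and if it is adjacent to exactly $n-1$ members then those $n-1$ neighbours are S-vertices. The natural supply of external vertices is \emph{sums of clique members}: since $I_kI_l=(0)$ for $k\neq l$, for any index set $S$ and any $k\notin S$ we get $I_k\bigl(\sum_{l\in S}I_l\bigr)=(0)$, so such a sum is again an annihilating ideal, while $\bigl(\sum_{l\in S}I_l\bigr)I_m=I_m^{2}$ for $m\in S$. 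Thus the square hypotheses are exactly what decides adjacency of the sum to a given $I_m$, and the containment hypotheses are what force the sum to be a genuinely new vertex.

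I would first dispose of (4): in a reduced ring $I^{2}=(0)$ forces $I=(0)$, so every vertex satisfies $I_k^{2}\neq(0)$ and (4) is the special case of (1) in which any two indices work. For (1), assume $I_1^{2}\neq(0)$ and $I_2^{2}\neq(0)$ and set $x=I_1+I_2$. Then $xI_k=(0)$ for $k\geq 3$, while $xI_1=I_1^{2}\neq(0)$ and $xI_2=I_2^{2}\neq(0)$; moreover $x\neq I_k$ for every $k$, since any such equality would push $I_1$ or $I_2$ inside another clique member and hence force its nonzero square into a vanishing product. So $x$ is external and adjacent to exactly the $n-2$ vertices $I_3,\dots,I_n$; as $1\le n-2\le n-2=|C|-2$, Lemma \ref{triv2} makes all $n$ of $I_1,\dots,I_n$ S-vertices.

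For (2) (and its mirror image (3)) I would again put $x=I_1+I_2$, where now $I_1^{2}\neq(0)$ and $I_2\nsubseteq I_1$. Here $I_2\nsubseteq I_1$ gives $x\neq I_1$ and $I_1^{2}\neq(0)$ gives $x\neq I_k$ for $k\geq 2$, so $x$ is external. If $I_2^{2}\neq(0)$ we are back in case (1); and if some other $I_k^{2}\neq(0)$ ($k\geq 2$) then automatically $I_k\nsubseteq I_1$ and $I_1+I_k$ lands us in (1) as well. The residual situation is $I_k^{2}=(0)$ for all $k\geq 2$. Then $x$ is adjacent to $I_2,\dots,I_n$ but not to $I_1$, so Lemma \ref{triv2} only delivers the $n-1$ S-vertices $I_2,\dots,I_n$. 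To recover the $n$-th I would enlarge the clique: $W=I_2+\dots+I_n$ annihilates every $I_k$ (including $I_1$, as $I_1I_l=(0)$ for $l\geq 2$), and $W\neq I_1$ because $WI_1=(0)$ while $I_1^{2}\neq(0)$; granting $W\neq I_k$ for $k\geq 2$, the set $\{I_1,\dots,I_n,W\}$ is a clique of size $n+1$ on which $x$ is still adjacent to $W$ (compute $xW=I_2^{2}+\dots=(0)$) and misses only $I_1$, so Lemma \ref{triv2} promotes the count to the $n$ S-vertices $I_2,\dots,I_n,W$.

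The main obstacle is precisely this last step in (2)–(3). One cannot simply reduce to (1), because every ideal containing $I_1$ has nonzero square and any two such ideals are non-adjacent (their product contains $I_1^{2}\neq(0)$); hence no second nonzero-square member can be manufactured inside a common clique, and the missing S-vertex must be produced directly. The delicate point is the distinctness bookkeeping for the enlarging ideal $W$ (or, alternatively, for ${\rm Ann}(I_1)$ as the common neighbour): in degenerate configurations $W$ can collapse onto one of the $I_k$, and clearing those collapses — equivalently, verifying directly that $I_1$ itself has a neighbour avoiding some $I_k$ — is where the argument will need the most care.
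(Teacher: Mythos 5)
Your Part (1) is exactly the paper's proof: the external vertex $I_1+I_2$, the distinctness check (any equality $I_1+I_2=I_k$ would force $I_1^2=(0)$ or $I_2^2=(0)$), and the appeal to Lemma \ref{triv2} with $n-2$ links are all the same, and your reduction of Part (4) to Part (1) via $nil(R)=(0)$ is clearly the intended one. The paper proves only Part (1) and explicitly leaves (2)--(4) to the reader, so for (2) and (3) there is no printed argument to compare against.

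For Parts (2) and (3) the gap you flag at the end is genuine, and it is worse than a bookkeeping issue: in the residual case ($I_k^2=(0)$ for every $k\neq i$) one \emph{cannot} conclude that all of $I_1,\ldots,I_n$ are S-vertices, because that is false. Take $R=K[a,b]/(a^3,\,ab,\,b^2)$ and the clique $\{(a),(b),(a^2)\}$: here $(a)^2=(a^2)\neq(0)$ and $(b)\nsubseteq(a)$, so the hypotheses of (2) hold with $I_i=(a)$, yet $(a)$ is not an S-vertex --- every neighbour of $(a)$ is a nonzero ideal inside ${\rm Ann}((a))=(a^2,b)$, and $(a^2,b)$ is annihilated by the whole maximal ideal $(a,b)$, hence by every vertex of $\Bbb{AG}(R)$, so the required non-adjacent pair can never be produced. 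Consequently the $n$-th S-vertex must be found outside the original clique, exactly as your enlargement $W=\sum_{k\neq i}I_k$ attempts (in this example $W=(a^2,b)$ works and the graph does contain $n=3$ S-vertices, so the statement itself survives under the literal reading ``contains $n$ S-vertices''). But the collapse $W=I_k$, which occurs precisely when some $I_k$ with $k\neq i$ contains all the other $I_l$ ($l\neq i$), is not handled, and in that situation none of the sums $I_i+I_l$ is guaranteed to be a new vertex either; so as written your argument establishes only $n-1$ S-vertices in that case and Parts (2) and (3) remain unproved.
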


\begin{pproof} We just prove Part (1) and leave the other parts to the reader. Without loss of generality suppose that $I_1^2 \neq (0)$ and $I_2^2 \neq (0)$. Now the proof follows from Lemma \ref{triv2} and the fact that $I_1+I_2$ is a vertex different from all vertices of the clique and makes a link with each of them except $I_1$ and $I_2$. Note that $I_1+I_2 \neq R$. Otherwise, $I_3 = I_3R = I_3I_1+I_3I_2 = (0)$ which is a contradiction. \end{pproof}

\begin{llem} \label{direct} Let $R = R_1 \times R_2 \times \cdots \times R_n$ be the direct product of $n \geq 2$ rings.
If $\Bbb {AG}(R)$ has no S-vertices, then $n = 2$ and $R = R_1 \times R_2$, where each of the rings $R_1$ and $R_2$ is an integral domain.
\end{llem}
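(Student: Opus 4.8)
The plan is to prove the contrapositive, and to funnel both cases ($n\geq 3$ and $n=2$ with a bad factor) through a single clean claim:

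\medskip
\noindent\textbf{Key claim.} \emph{If $R\cong A\times B$ is a product of two nonzero rings and $A$ is not an integral domain, then $\Bbb{AG}(R)$ has an S-vertex.}
\medskip

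\noindent To prove the claim I would first extract two ideals. Since $A$ is not a domain there is a nonzero zero-divisor $c\in A$; setting $I={\rm Ann}_A(c)\cdot 0$— more precisely $I=Ac$ and $J={\rm Ann}_A(c)$—gives nonzero ideals of $A$ with $IJ=(0)$, and $I\neq A$ (otherwise $J=IJ=(0)$). I then propose the four vertices
$a=(I,0)$, $x=(J,B)$, $b=(0,B)$, $y=(A,0)$.
The routine bookkeeping is to check each is a genuine vertex, i.e.\ a nonzero ideal with nonzero annihilator: here ${\rm Ann}_R(a)={\rm Ann}_A(I)\times B\neq(0)$, ${\rm Ann}_R(x)={\rm Ann}_A(J)\times 0\neq(0)$ (because $I\subseteq{\rm Ann}_A(J)$ and $I\neq(0)$), ${\rm Ann}_R(b)=A\times 0\neq(0)$, and ${\rm Ann}_R(y)=0\times B\neq(0)$. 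One also checks the four are pairwise distinct and each differs from $a$, using $I,J\neq(0)$, $B\neq(0)$, $A\neq(0)$ and $I\neq A$. Then $a$--$x$ is an edge since $(I,0)(J,B)=(IJ,0)=(0)$, while $a$--$b$ and $b$--$y$ are edges trivially; and the crucial non-edge is $x$--$y$, because $(J,B)(A,0)=(JA,0)=(J,0)\neq(0)$. By the definition of an S-vertex this makes $a$ an S-vertex.

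With the claim established, the reduction is short. Assume $\Bbb{AG}(R)$ has no S-vertex. If $n\geq 3$, regroup $R\cong\bigl(R_1\times\cdots\times R_{n-1}\bigr)\times R_n$; the left factor is a product of at least two nonzero rings, hence contains the zero-divisor $(1,0,\dots,0)$ and is not a domain, so the claim forces an S-vertex, a contradiction. Thus $n=2$, say $R\cong R_1\times R_2$. If $R_1$ were not a domain, the claim (with $A=R_1$, $B=R_2$) gives an S-vertex; symmetrically for $R_2$ (apply the claim to $R\cong R_2\times R_1$). Hence both $R_1$ and $R_2$ are integral domains, which is exactly the desired conclusion. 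As a consistency check, when both factors are domains $\Bbb{AG}(R)$ is a complete bipartite graph, which by Lemma \ref{triv1}(3) has no S-vertices, so the statement is sharp.

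The main obstacle is purely in the design of the four vertices rather than in the reduction: I must keep $x$ adjacent to $a$ yet non-adjacent to $y$. The trick that makes this work is to give $x$ the \emph{full} second component $B$, so adjacency to $a=(I,0)$ is governed solely by the first coordinate ($IJ=(0)$), while multiplication by $y=(A,0)$ leaves $J$ intact ($JA=J\neq(0)$) and so cannot produce an edge. A secondary point to handle carefully is verifying that $x=(J,B)$ really is a vertex, since its annihilator lives entirely in the first coordinate and is nonzero precisely because $I\subseteq {\rm Ann}_A(J)$; this is where the hypothesis $IJ=(0)$ is used a second time.
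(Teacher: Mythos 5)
Your proof is correct, and it does diverge from the paper's in one structural respect. For the two-factor case your construction is essentially the paper's: the paper exhibits the path $(0,R_2)$---$(R_1,0)$---$(0,I)$---$(R_1,J)$ with $IJ=(0)$ in $R_2$, which is the mirror image of your configuration $(J,B)$---$(I,0)$---$(0,B)$---$(A,0)$ (you merely make the ideals explicit as $I=Ac$, $J={\rm Ann}_A(c)$, whereas the paper just invokes their existence). The genuine difference is in the case $n\geq 3$: the paper builds the clique $\{R_1\times(0)\times(0),\,(0)\times R_2\times(0),\,(0)\times(0)\times R_3\}$ together with the extra vertex $(0)\times R_2\times R_3$ and appeals to Lemma \ref{triv2}, while you regroup $R$ as $(R_1\times\cdots\times R_{n-1})\times R_n$ and observe that the left factor is not a domain, so both cases funnel through a single two-factor claim. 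Your route buys a unified, self-contained argument that does not depend on Lemma \ref{triv2}; the paper's route is shorter on the page because the clique lemma does the work. Your verifications (that all four ideals are nonzero annihilating ideals, pairwise distinct, with the one required non-edge $(J,B)(A,0)=(J,0)\neq(0)$) are all sound, so there is no gap.
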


\begin{pproof}Without loss of generality suppose $n = 3$. Let $C = \{I_1, I_2, I_3\}$, where $I_1 = R_1 \times (0) \times (0)$, $I_2 = (0) \times R_2 \times (0)$, and $I_3 = (0) \times (0) \times R_3$. Clearly $C$ is a clique in $\Bbb {AG}(R)$. Let $A = (0) \times R_2 \times R_3$. Now Lemma \ref{triv2} implies the existence of an S-vertex in $\Bbb {AG}(R)$ which is a contradiction. Hence $n = 2$ and $R = R_1 \times R_2$.\\
\noindent Now suppose that $R_2$ is not an integral domain. Thus, there exist two nonzero proper ideals $I$ and $J$ in $R_2$ such that $IJ = (0)$. Therefore, \begin{center}
$(0, R_2)$---$(R_1, 0)$---$(0, I)$---$(R_1,J)$ \end{center}
implies the existence of an S-vertex, yielding a contradiction. Thus $R_2$ and similarly $R_1$ are integral domains.
 \end{pproof}

\begin{ppro} Let $R$ be a commutative ring. Then \begin{enumerate}
\item[$\mathrm{(1)}$] If $R$ is a non-local Artinian ring, then $\Bbb {AG}(R)$ has no S-vertices if and only if $R = F_1 \times F_2$ where each of $F_1$ and $F_2$ is a field.

\item[$\mathrm{(2)}$] Let $R$ be an Artinian ring with ${\rm gr}(\Bbb {AG}(R)) = 4$. Then $R$ can not be a local ring.
 \end{enumerate}
\end{ppro}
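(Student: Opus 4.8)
The plan is to exploit the Artinian structure theorem together with Lemma \ref{direct} for part (1), and the preceding classification of non-reduced rings of girth $4$ for part (2); neither part requires new combinatorics once the earlier results are brought to bear.

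For part (1), I would first dispatch the easy converse: if $R = F_1 \times F_2$ with both factors fields, then the only nonzero proper ideals are $F_1 \times (0)$ and $(0) \times F_2$, each annihilating the other, so $\Bbb{AG}(R) \cong K_2$. Since an S-vertex requires a graph on at least four vertices, $\Bbb{AG}(R)$ has no S-vertex. For the forward direction, assume $\Bbb{AG}(R)$ has no S-vertices. As $R$ is Artinian it decomposes as a finite direct product $R \cong R_1 \times \cdots \times R_n$ of local Artinian rings, and non-locality forces $n \geq 2$. Now Lemma \ref{direct} applies verbatim and yields $n = 2$ with $R_1, R_2$ integral domains. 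The final step is the observation that an Artinian integral domain is a field, so $R = F_1 \times F_2$, as required.

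For part (2), I would argue by contradiction: suppose $R$ is local Artinian with ${\rm gr}(\Bbb{AG}(R)) = 4$. A local Artinian ring is either a field or non-reduced, since if it is not a field its maximal ideal is a nonzero nilpotent ideal and hence contains nonzero nilpotents. The field case is immediately impossible, because then $\Bbb{AG}(R)$ is empty and has girth $\infty$. Hence $R$ is non-reduced with $\Bbb{AG}(R)$ nonempty of girth $4$, so the preceding theorem characterizing non-reduced rings with ${\rm gr}(\Bbb{AG}(R)) = 4$ forces $R \cong R_1 \times R_2$ as a nontrivial product. But such a product carries the nontrivial idempotent $(1,0)$, so it cannot be local, contradicting our assumption.

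The work is really in lining up the hypotheses so the cited results apply, rather than in any delicate estimate. The point most in need of care is that in part (1) the Artinian hypothesis gets used twice, once to produce the product decomposition feeding into Lemma \ref{direct} and once (via \emph{Artinian domain is a field}) to upgrade the integral-domain conclusion to a field; and in part (2) that the field subcase is explicitly excluded, since the non-reduced classification theorem does not apply to it.
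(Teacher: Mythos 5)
Your proof is correct, and the two parts fare differently against the paper. Part (1) is essentially the paper's own argument: decompose the non-local Artinian ring as a finite product of local rings, feed this into Lemma \ref{direct} to get $n=2$ with both factors integral domains, and upgrade to fields via the fact that an Artinian domain is a field; the converse via $\Bbb{AG}(F_1\times F_2)\cong K_2$ (too few vertices to host an S-vertex) is the same easy observation the paper leaves implicit. Part (2), however, takes a genuinely different route. The paper argues directly: for an Artinian local ring $(R,M)$, Kaplansky's Theorem 82 gives $M={\rm Ann}(x)$ for some $0\neq x\in M$, so $I=Rx$ is adjacent to every vertex of $\Bbb{AG}(R)$; any cycle then contains an edge $J$---$K$ with $J,K\neq I$, producing the triangle $I$---$J$---$K$---$I$ and contradicting girth $4$. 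You instead route through the Section 3 classification of non-reduced rings with ${\rm gr}(\Bbb{AG}(R))=4$: a local Artinian ring that is not a field has nilpotent nonzero maximal ideal and so is non-reduced, the classification forces $R\cong R_1\times R_2$ nontrivially, and a nontrivial product has the idempotent $(1,0)$ and so cannot be local. Both arguments are valid; the paper's is more elementary and self-contained (it needs only the existence of a universal vertex, not the full structure theorem), while yours makes explicit use of the heavier machinery already established and correctly isolates the one point needing care, namely excluding the field case (empty graph, girth $\infty$) before invoking the non-reduced classification.
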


\begin{pproof} Part (1) is an immediate consequence of Proposition \ref{direct} and the fact that any Artinian ring is a finite direct product of local rings \cite[Theorem 8.7]{ati}. For Part (2), suppose $(R, M)$ is an Artinian local ring. Thus by \cite[Theorem 82]{kap}, $M = {\rm Ann}(x)$ for some $0 \neq x \in M$. Hence $I = Rx$ is an ideal which is adjacent to every nonzero proper ideal of $R$. Now since $\Bbb {AG}(R)$ contains a cycle, there exist two vertices $J$ and $K$ such that $I-J-K-I$. This is impossible since ${\rm gr}(\Bbb {AG}(R)) = 4$. Thus $R$ can not be a local ring.
\end{pproof}

\begin{llem} \label{red}
Let $R$ be a reduced ring such that $\Gamma(R)$ contains an S-vertex. Then $\Bbb {AG}(R)$ has an S-vertex. Thus the number of S-vertices of $\Gamma(R)$ is less than or equal to the number of S-vertices of $\Bbb {AG}(R)$ for any reduced ring.
\end{llem}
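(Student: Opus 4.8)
The plan is to transport a Smarandache configuration from $\Gamma(R)$ to $\Bbb{AG}(R)$ along the assignment $r\mapsto Rr$ carrying a zero-divisor to the principal ideal it generates. Suppose $a$ is an S-vertex of $\Gamma(R)$, witnessed by distinct nonzero zero-divisors $x,y,b$ with $b\neq a$, $ax=ab=by=0$, and $xy\neq0$. The dictionary I would set up is: $uv=0$ forces $(Ru)(Rv)=R(uv)=(0)$, so every edge of $\Gamma(R)$ lifts to an edge of $\Bbb{AG}(R)$; while $xy\in(Rx)(Ry)$ together with $xy\neq0$ shows $(Rx)(Ry)\neq(0)$, so the non-adjacency of $x$ and $y$ lifts to the non-adjacency of $Rx$ and $Ry$. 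Each of $Ra,Rx,Ry,Rb$ is a nonzero ideal with nonzero annihilator, hence a vertex of $\Bbb{AG}(R)$. The one place where reducedness enters is a separation lemma I would record first: if $u,v$ are nonzero with $uv=0$, then $Ru\neq Rv$, since $Ru=Rv$ gives $u=sv$ and hence $u^2=s(uv)=0$, contradicting $nil(R)=0$.

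With this in hand the argument splits according to whether $Rx=Ry$. First I would handle the generic case $Rx\neq Ry$, where I claim $Ra$ is an S-vertex of $\Bbb{AG}(R)$ with witnesses $Rx,Ry,Rb$. The three required edges $Ra-Rx$, $Ra-Rb$, and $Rb-Ry$ come from $ax=ab=by=0$, and the separation lemma makes the two endpoints of each distinct, so these are honest edges and $Rb\neq Ra$. The required non-edge $Rx-Ry$ is precisely $(Rx)(Ry)\neq(0)$. It then remains to see that $Rx,Ry,Rb$ are pairwise distinct: $Rx\neq Ry$ is the case hypothesis, $Ry\neq Rb$ follows from $by=0$ and the separation lemma, and $Rx\neq Rb$ holds because $Rx=Rb$ with $by=0$ would give $(Rx)(Ry)=(Rb)(Ry)=(0)$, contradicting $xy\neq0$.

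The degenerate case $Rx=Ry$ is the one where naive transport breaks down, since then the two intended witnesses coincide, and here I would manufacture a fresh witness. From $Rx=Ry$ I would first extract the extra relations $ay=0$ and $bx=0$ (write $y=tx$ and $x=sy$ and combine with $ax=by=0$), and then show that $Ra$ is an S-vertex with witnesses $Rx$, $Rb$, and $Ra+Rx$. The edges $Ra-Rx$ and $Ra-Rb$ are as before; the edge $Rb-(Ra+Rx)$ uses $(Rb)(Ra+Rx)=R(ab)+R(bx)=(0)$; and the non-edge $Rx-(Ra+Rx)$ uses $(Rx)(Ra+Rx)=Rx^2\ni x^2\neq0$. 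For distinctness I would again lean on reducedness: each of the equalities $Ra+Rx=Rx$, $Ra+Rx=Rb$, or $Ra+Rx=Ra$ forces an inclusion ($Ra\subseteq Rx$, $Rx\subseteq Rb$, or $Rx\subseteq Ra$) that makes $a^2$ or $x^2$ vanish, which is impossible, while $Rx\neq Rb$ follows from $bx=0$. Since $Ra+Rx$ contains $a\neq0$ and is killed by $Rb\neq(0)$, it is a genuine vertex, and the two cases together establish that $\Bbb{AG}(R)$ has an S-vertex.

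For the final cardinality statement I would attempt to upgrade the existence argument to an injection from the S-vertices of $\Gamma(R)$ into those of $\Bbb{AG}(R)$ via $a\mapsto Ra$. The cases above already show this assignment lands among the S-vertices of $\Bbb{AG}(R)$, so the whole inequality hinges on injectivity, and this is the step I expect to be the main obstacle. The obstruction is genuine rather than cosmetic: associates generate the same principal ideal (for instance $a$ and $2a$ whenever $2$ is a unit with $2\neq1$), so $a\mapsto Ra$ collapses classes of associates and is generally not injective on the vertices of $\Gamma(R)$; indeed for $R=\Bbb{Z}_3\times\Bbb{Z}_3\times\Bbb{Z}_3$ one checks that $\Gamma(R)$ has strictly more S-vertices than $\Bbb{AG}(R)$. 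Securing the stated inequality would therefore require either counting S-vertices up to the relation $a\sim a'\iff Ra=Ra'$, or adding a hypothesis under which distinct zero-divisors are non-associate so that $a\mapsto Ra$ becomes injective; isolating the precise hypothesis is the part of the proof I would scrutinize most carefully.
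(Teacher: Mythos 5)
Your treatment of the existence assertion is correct and follows the same basic route as the paper: both transport the Smarandache configuration along $r\mapsto Rr$, using reducedness to see that adjacent zero-divisors generate distinct principal ideals and that $xy\neq 0$ forces $(Rx)(Ry)\neq(0)$. The difference is that the paper's proof is a one-line lift (``$Ra-Rx-Ry-Rb$ is a path of length $3$ in $\Bbb{AG}(R)$ since $R$ is reduced'') which silently assumes that the two \emph{non-adjacent} witnesses generate distinct principal ideals. That assumption can fail: the two ends of the path may be associates (in $\Bbb{Z}_3\times\Bbb{Z}_3\times\Bbb{Z}_3$ take the S-vertex $(1,0,0)$ with witnesses $(0,0,1)$, $(0,1,0)$, $(0,2,0)$, say), in which case the lifted configuration degenerates to a triangle and witnesses nothing. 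Your case split on whether the two non-adjacent witnesses generate the same ideal, together with the repair via the auxiliary vertex $Ra+Rx$, covers exactly the situation the paper's argument omits, and all of your distinctness checks are right. So for the first sentence of the lemma your write-up is more complete than the paper's.

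Your suspicion about the final cardinality sentence is also well founded, and you should trust your counterexample rather than hunt for a missing hypothesis. The paper supplies no injectivity argument at all --- it only produces one S-vertex of $\Bbb{AG}(R)$ from each S-vertex of $\Gamma(R)$ via $x\mapsto Rx$, and this map collapses associates. For $R=\Bbb{Z}_3\times\Bbb{Z}_3\times\Bbb{Z}_3$ the S-vertices of $\Gamma(R)$ are exactly the six elements with one nonzero coordinate (an element with two nonzero coordinates has only the two elements supported on the remaining coordinate as neighbours, and every further neighbour is again adjacent to both of them, so the non-adjacency condition cannot be met), whereas $\Bbb{AG}(R)$ has six vertices of which only the three coordinate-axis ideals are S-vertices (the three ideals supported on two coordinates have degree $1$). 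Thus $\Gamma(R)$ has $6$ S-vertices and $\Bbb{AG}(R)$ has $3$, and the inequality asserted in the second sentence of the lemma is false as stated. The correct fix is the one you identify: count S-vertices of $\Gamma(R)$ up to the associate relation $a\sim a'$ iff $Ra=Ra'$.
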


\begin{pproof} Let $a-x-y-b$ be a path of length 3 in $\Gamma(R)$ such that $x$ is an S-vertex in $\Gamma(R)$. Clearly $ab \neq 0$ by definition. Thus $Ra-Rx-Ry-Rb$ is a path of length 3 in $\Bbb {AG}(R)$ since $R$ is reduced. Therefore $Rx$ is an S-vertex in $\Bbb {AG}(R)$ by definition. \end{pproof}

\begin{ttheo} The following are true for a reduced ring $R$.
\begin{enumerate}
\item[$\mathrm{(1)}$] Assume $R$ contains $k \geq 3$ distinct minimal prime ideals. Then each of $\Gamma(R$) and $\Bbb {AG}(R)$ has an S-vertex.

\item[$\mathrm{(2)}$] Let $Z(R)$ be the union of two primes with intersection $(0)$. Then $\Bbb {AG}(R)$ has no S-vertices.

\item[$\mathrm{(3)}$] If ${\rm gr}(\Bbb {AG}(R)) = 4$, then $\Bbb {AG}(R)$ has no S-vertices.

\item[$\mathrm{(4)}$] Suppose that $\Bbb{AG}(R)$ is nonempty with ${\rm gr}(\Bbb{AG}(R))=\infty$. Then $\Bbb{AG}(R)$ has no S-vertices.

\end{enumerate}
\end{ttheo}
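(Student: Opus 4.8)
The plan is to dispatch the four parts by reducing each to a structural description of $\Bbb{AG}(R)$ (or $\Gamma(R)$) that was already obtained earlier, and then quoting the appropriate clause of Lemma \ref{triv1}. For Part (2), since $R$ is reduced and $Z(R)$ is a union of two primes meeting in $(0)$, Proposition \ref{cnew} shows that $\Bbb{AG}(R)$ is a complete bipartite graph, which has no S-vertices by Lemma \ref{triv1}(3). For Part (3), if ${\rm gr}(\Bbb{AG}(R))=4$ then Theorem \ref{3.4} gives $\Bbb{AG}(R)\cong K_{|V_1|,|V_2|}$, again complete bipartite, so Lemma \ref{triv1}(3) applies. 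For Part (4), a nonempty $\Bbb{AG}(R)$ with ${\rm gr}(\Bbb{AG}(R))=\infty$ is a star graph by Theorem \ref{infty}, and a star graph has no S-vertices by Lemma \ref{triv1}(2). Thus Parts (2)--(4) become one line each once the earlier classifications are invoked.

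The substance is Part (1). First I would produce an S-vertex in $\Gamma(R)$ and then transport it to $\Bbb{AG}(R)$ via Lemma \ref{red}, which is tailor-made for this, turning a length-$3$ path in $\Gamma(R)$ into one in $\Bbb{AG}(R)$ for reduced $R$. To build the path I would use that distinct minimal primes are incomparable, together with prime avoidance: choose $s_1\in P_1\setminus\bigcup_{i\ne 1}P_i$ and $s_2\in P_2\setminus\bigcup_{i\ne 2}P_i$, and then $a\in(\bigcap_{i\ne 1}P_i)\setminus P_1$ and $b\in(\bigcap_{i\ne 2}P_i)\setminus P_2$. In a reduced ring $uv=0$ if and only if every minimal prime contains $u$ or $v$ (since $\bigcap_i P_i=nil(R)=(0)$), equivalently ${\rm Ann}(u)=\bigcap_{u\notin P_i}P_i$; using this one checks directly that $s_1a=ab=bs_2=0$ while $s_1s_2\ne 0$. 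Hence $s_1-a-b-s_2$ is a path of length $3$ in $\Gamma(R)$ whose two endpoints are non-adjacent, so $a$ is an S-vertex of $\Gamma(R)$, and Lemma \ref{red} then yields an S-vertex of $\Bbb{AG}(R)$.

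The delicate point in Part (1) is bookkeeping the distinctness and nonvanishing of the four elements, and this is exactly where the hypothesis $k\ge 3$ enters: the third prime $P_3$ acts as a separator, since $s_1\notin P_3$ while $b\in P_3$ forces $s_1\ne b$, and $s_2\notin P_3$ while $a\in P_3$ forces $s_2\ne a$, the memberships in $P_1$ versus $P_2$ separating the remaining pairs, and each element is a nonzero zero-divisor by lying in, and outside of, suitable minimal primes. The hard part will be the reliance on prime avoidance, which controls only a \emph{finite} union of minimal primes; this is precisely the hypothesis under which ${\rm diam}(\Bbb{AG}(R))=3$ was recalled after Lemma \ref{triv1} (and in \cite[Lemma 1.8]{be-ra2}), so in the finitely-many-minimal-primes setting the argument is clean. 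Alternatively, and more in the spirit of that example, I could simply observe that ${\rm Ann}(s_1)\cap{\rm Ann}(s_2)=\bigcap_i P_i=(0)$, so $s_1$ and $s_2$ have no nonzero common neighbour and $d(s_1,s_2)=3$; hence ${\rm diam}(\Gamma(R))={\rm diam}(\Bbb{AG}(R))=3$, and both graphs have an S-vertex immediately by Lemma \ref{triv1}(7).
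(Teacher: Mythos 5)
Your proposal is correct and follows essentially the same route as the paper: Parts (2)--(4) are dispatched by quoting Proposition \ref{cnew}, Theorem \ref{3.4}, and Theorem \ref{infty} together with Lemma \ref{triv1}, and Part (1) builds a length-$3$ path in $\Gamma(R)$ with non-adjacent endpoints and transports it to $\Bbb{AG}(R)$ via Lemma \ref{red}. The only cosmetic difference is that the paper exhibits your elements $a\in(\bigcap_{i\ne 1}P_i)\setminus P_1$ and $b\in(\bigcap_{i\ne 2}P_i)\setminus P_2$ explicitly as the products $a_2a_3\cdots a_k$ and $a_1a_3\cdots a_k$ of elements $a_i\in P_i\setminus\bigcup_{j\ne i}P_j$, which sidesteps the separate existence argument (and both proofs equally rely on the finitely-many-minimal-primes reading of the hypothesis that you correctly flag).
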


\begin{proof} We just prove Part (1) since the other three parts are immediate from Lemma \ref{triv1} and Proposition \ref{cnew}, Theorem \ref{3.4}, and Theorem \ref{infty} respectively. Since $R$ is reduced, then $nil(R) = (0) = \cap P_i$ for $1 \leq i \leq k$, where $nil(R)$ is the ideal of all nilpotent elements of $R$. Let $a_i$ be in $P_i \setminus \cup P_j$ for all $1 \leq j \neq i \leq k$. Clearly $a_1a_2a_3 \cdots a_k = 0$. Let $x = a_2a_3 \cdots a_k$ and $y = a_1a_3a_4 \cdots a_k$. Now by hypothesis, it is easy to see that $a_1$, $x$, $y$, and $a_2$ are all distinct and nonzero elements of $R$ and $a_1x = xy = ya_2 = 0$ with $a_1a_2 \neq 0$. Therefore, $x$ and $y$ are S-vertices in $\Gamma(R)$. Now the proof is complete by Lemma \ref{red}. \end{proof}

\begin{rrem} From Lemma \ref{triv1}(7), it is clear that if $\Gamma(R)$ [resp. $\Bbb {AG}(R)$] contains no S-vertices, then ${\rm diam}(\Gamma(R)) \neq 3$ [resp. ${\rm diam}(\Bbb {AG}(R)) \neq 3$]. In other words, ${\rm diam}(\Gamma(R)) \leq 2$ [resp. ${\rm diam}(\Bbb {AG}(R)) \leq 2$] since the diameter of each of these graphs is less than or equal to 3. Also, Proposition 1.1 of \cite{be-ra2} provides a relation between the diameters of $\Gamma(R)$ and $\Bbb {AG}(R)$. Consequently, combining the results of \cite[Proposition 1.1]{be-ra2} and existence (nonexistence) of S-vertices of these graphs may provide a relation between the S-vertices and diameters of $\Gamma(R)$ and $\Bbb {AG}(R)$. For example, if $\Bbb {AG}(R)$ contains no S-vertices, then ${\rm diam}(\Bbb {AG}(R)) \neq 3$ which by \cite[Proposition 1.1(d)]{be-ra2}, it implies ${\rm diam}(\Gamma(R)) \neq 3$. Notice that \cite[Proposition 1.1(d)]{be-ra2} states that if ${\rm diam}(\Gamma(R)) = 3$, then ${\rm diam}(\Bbb {AG}(R)) = 3$. \end{rrem}

\begin{ttheo} \label{bipart}
The following are true for a commutative ring $R$.
\begin{enumerate}
\item[$\mathrm{(1)}$] Let ${\rm gr}(\Bbb {AG}(R)) = 4$ and $I-J-K-L-I$ be a cycle in $\Bbb {AG}(R)$ such that $I^2 \neq 0$. Then $\Bbb {AG}(R)$ is complete bipartite when $\Bbb {AG}(R)$ has no S-vertices.

\item[$\mathrm{(2)}$] If $\Bbb {AG}(R)$ is complete bipartite, then $(\Bbb {AG}(R)$ has no S-vertices with ${\rm gr}(\Bbb {AG}(R)) = 4$ or $\infty$.
\end{enumerate}
\end{ttheo}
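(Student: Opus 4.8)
The plan is to handle the two parts separately: Part (2) will follow almost immediately from results already established, while Part (1) will be reduced to a short combinatorial argument once the ring-theoretic hypotheses are translated into graph-theoretic conditions.

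For Part (2), suppose $\Bbb{AG}(R)\cong K_{m,n}$ is complete bipartite. Then Lemma \ref{triv1}(3) immediately yields that $\Bbb{AG}(R)$ has no S-vertices. For the girth I would distinguish two cases by the sizes of the parts: if one part is a singleton, then $\Bbb{AG}(R)$ is a star (or a single edge) and hence acyclic, so ${\rm gr}(\Bbb{AG}(R))=\infty$; if both parts have at least two vertices, then $\Bbb{AG}(R)$ contains a $4$-cycle but, being bipartite, contains no triangle, so ${\rm gr}(\Bbb{AG}(R))=4$. This settles Part (2).

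For Part (1), the idea is to exploit the no-S-vertex hypothesis to force a bipartition. First I record two consequences of the hypotheses: since ${\rm gr}(\Bbb{AG}(R))=4$ the graph is triangle-free, and since it has no S-vertices, Lemma \ref{triv1}(7) gives ${\rm diam}(\Bbb{AG}(R))\neq 3$, whence ${\rm diam}(\Bbb{AG}(R))\leq 2$ because $\Bbb{AG}(R)$ is connected of diameter at most $3$. I then partition the vertices using the vertex $I$ of the given cycle, exactly as in the proof of Theorem \ref{3.4}: set $V_1=\{T\in\Bbb{A}(R)^{*}:TI=(0)\}$ (the neighbours of $I$) and $V_2=\{S\in\Bbb{A}(R)^{*}:SI\neq(0)\}$ (the non-neighbours together with $I$ itself, where $I\in V_2$ precisely because $I^2\neq(0)$). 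Since ${\rm diam}(\Bbb{AG}(R))\leq 2$, each vertex of $V_2\setminus\{I\}$ lies at distance exactly $2$ from $I$, and by definition $I$ is adjacent to no element of $V_2$. Now I verify the three defining properties of a complete bipartite graph on $V_1\cup V_2$. Independence of $V_1$ is immediate from triangle-freeness, since two adjacent neighbours of $I$ would form a triangle with $I$. For independence of $V_2$, if $u,v\in V_2\setminus\{I\}$ were adjacent, then choosing a neighbour $w\in V_1$ of $u$ produces a path $I-w-u-v$ on four distinct vertices, and the absence of S-vertices (applied at $w$) would force $I$ and $v$ adjacent, contradicting $v\in V_2$. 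For complete joining, $I$ is adjacent to all of $V_1$ by definition, and for $u\in V_2\setminus\{I\}$ and any $w_1\in V_1$ distinct from a fixed neighbour $w_0\in V_1$ of $u$, applying the no-S-vertex condition (at $w_0$) to the path $u-w_0-I-w_1$ forces $u$ and $w_1$ adjacent. Hence $\Bbb{AG}(R)\cong K_{|V_1|,|V_2|}$ is complete bipartite.

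The only nontrivial point — the place I would be most careful — is organizing the applications of the no-S-vertex hypothesis so that in each case the four vertices involved are genuinely distinct and the three required edges are present. Here the hypotheses do the real work: ${\rm gr}(\Bbb{AG}(R))=4$ (triangle-freeness) rules out the degenerate coincidences and chords that would otherwise spoil the distinctness checks, and $I^2\neq(0)$ places $I$ unambiguously in $V_2$ so that the neighbours of $I$ are exactly $V_1$. Everything else is routine bookkeeping once these distinctness verifications are in place.
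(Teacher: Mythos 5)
Your proof is correct and follows essentially the same route as the paper: deduce ${\rm diam}(\Bbb{AG}(R))=2$ from the absence of S-vertices, bipartition the vertices by whether they annihilate $I$ (using $I^2\neq(0)$ to place $I$), and use the no-S-vertex hypothesis on a length-3 path to force the missing adjacencies. The only cosmetic difference is that the paper exhibits $I$ itself as the would-be S-vertex on the path $T$--$I$--$H$--$S$ and then gets independence of $V_2$ from triangle-freeness, whereas you apply the S-vertex condition at a vertex of $V_1$ in two separate steps; both versions of the bookkeeping are sound.
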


\begin{pproof}
We just give a proof for Part (1) since the other part is obvious. Clearly, ${\rm diam}(\Bbb{AG}(R)) \neq 3$ since $\Bbb {AG}(R)$ has no S-vertices. If ${\rm diam}(\Bbb{AG}(R))=0$ or $1$, then $\Bbb{AG}(R)$ is a complete graph and so ${\rm gr}(\Bbb{AG}(R))$ is $3$ or $\infty$, yielding a contradiction. Therefore, ${\rm diam}(\Bbb{AG}(R))=2$. We now show that $\Bbb{AG}(R)$ is a complete bipartite graph. Since ${\rm gr}(\Bbb{AG}(R))=4$, there exist $I,J,K,L\in \Bbb{AG}(R)$ such that $I-J-K-L-I$ with $I^2 \neq (0)$ by hypothesis. We show that $\Bbb{AG}(R)\cong K_{|V_1|, |V_2|}$, where $V_1=\{T\in \Bbb{A}(R)^{*}: T\subseteq {\rm Ann}(I)\}$ and $V_2=\{S\in \Bbb{A}(R)^{*}: S\nsubseteq {\rm Ann}(I)\}$. Let $T, T_1\in V_1$ and $S, S_1\in V_2$. Then $IT=(0)$ and $IS\neq (0)$. Assume that $TS\neq (0)$. Since ${\rm diam}(\Bbb{AG}(R))=2$, there exists $H\in \Bbb{A}(R)^*$ such that $I-H-S$. Clearly, $TS \neq (0)$ implies that $T$ is not contained in $H$ and $T \neq H$. If $TH = (0)$, then ${\rm gr}(\Bbb {AG}(R)) = 3$, yielding a contradiction. Also $T$ is not a proper subset of $S$ since $TH \neq (0)$. Thus $I$ is an S-vertex in $\Bbb {AG}(R)$ which is a contradiction. Therefore $TS=(0)$. If $TT_1=(0)$, then $I-T-T_1-I$ is a cycle, yielding a contradiction. So, $TT_1\neq (0)$. Similarly $SS_1\neq (0)$. Also $V_1\cap V_2=\emptyset$. Therefore, $\Bbb{AG}(R)\cong K_{|V_1|, |V_2|}$ and so $\Bbb{AG}(R)$ is a complete bipartite graph. \end{pproof}

\end{document}